  \def\refstepcounter@optarg[#1]#2{%
    \cref@old@refstepcounter{#2}%
    \cref@constructprefix{#2}{\cref@result}%
    \@ifundefined{cref@#1@alias}%
      {\def\@tempa{#1}}%
      {\def\@tempa{\csname cref@#1@alias\endcsname}}%
    \protected@edef\cref@currentlabel{%
      [\@tempa][\arabic{#2}][\cref@result]%
      \csname p@#2\endcsname\csname the#2\endcsname}%
  }%
\crefname{hypothesis}{Hypothesis}{Hypotheses}
\title{Greedy recursion parameter selection for one-way spatial integration of hyperbolic equations\thanks{Submitted to the editors October 27, 2025.
\funding{This work was funded by The Boeing Company through the Strategic Research and Development Relationship
Agreement CT-BA-GTA-1.}}}
\author{Michael K.\ Sleeman\thanks{California Institute of Technology, Pasadena, CA 
  (\email{msleeman@caltech.edu}).}
\and Tim Colonius\footnotemark[1]}
\DeclareMathOperator{\diag}{diag}
\newcommand*{\addFileDependency}[1]{
  \typeout{(#1)}
  \@addtofilelist{#1}
  \IfFileExists{#1}{}{\typeout{No file #1.}}
}
\begin{document}

\maketitle

\begin{abstract}
Solutions to hyperbolic systems comprise waves propagating at finite speeds. When wave propagation is predominantly unidirectional, one-way wave equations can be used to evolve only the right-going solution by removing support for left-going waves. The One-Way Navier-Stokes (OWNS) approach, which was originally developed for systems of first-order hyperbolic equations, constructs one-way approximations to the linearized Navier-Stokes equations using a recursive filter to remove left-going waves. The computational cost scales with the number of recursion parameters, which must be carefully chosen to ensure accuracy and stability of the resulting one-way equation. Previous work has chosen parameters based on heuristic estimates of key eigenvalues, which requires trial-and-error tuning while also yielding slow error convergence. We propose a greedy algorithm for automatic parameter selection, which we show yields faster convergence and a net decrease in computational cost for linear and nonlinear disturbance evolution in boundary-layer flows. We review the OWNS projection (OWNS-P) and recursive (OWNS-R) methods, comparing their convergence properties, and show through our numerical analysis and experiments that OWNS-P yields superior convergence and stability properties. Although we demonstrate the method for Navier-Stokes equations, we perform our analyses on systems of linear first-order hyperbolic equations and emphasize that the greedy algorithm is applicable to such systems.
\end{abstract}

\begin{keywords}
  hyperbolic, one-way equation, parabolic approximation, greedy algorithm, spatial marching
\end{keywords}

\begin{AMS}
35L50, 65M99, 76E09
\end{AMS}

\section{Introduction}

Wave propagation can be modeled by (linear) systems of first-order hyperbolic partial differential equations (PDEs), solvable either as time-domain initial boundary value problems or frequency-domain boundary value problems. When wave propagation is predominantly unidirectional, one-way wave equations can be used to evolve the right-going solution by removing support for left-going waves. Such equations can be obtained by factoring the dispersion relation in Fourier–Laplace space into left- and right-going factors, and then retaining only the right-going branch. Since the resulting equation contains a square root of the Fourier-Laplace variables, the transformation back to physical space results in a nonlocal integro-differential equation, which can be localized using rational approximations of the square root~\cite{Lee_2000_Parabolic}. Although these methods are accurate, well-posed, and efficient for simple wave equations~\cite{Trefethen_1986_Parabolic,Halpern_1988_OneWay} such as the equations for geophysical migration of seismic waves~\cite{Claerbout_1976_Geophysics,Claerbout_1985_Geophysics} and underwater acoustics~\cite{Collins_1989_Underwater,Jensen_1995_Underwater}, they can only be applied in cases where the eigenvalues can be determined analytically, since the dispersion relation must be factored analytically. Methods for one-way marching that do not depend on this factorization have been developed by Guddati~\cite{Guddati_2006_OneWay} for acoustic and elastic wave equations, and by Towne and Colonius~\cite{Towne_2015_OWNS-O} for general systems of first-order hyperbolic equations.

While the linearized Euler equations comprise a system of first-order hyperbolic equations, the linearized Navier-Stokes do not. Nevertheless, the One-Way Navier-Stokes (OWNS) framework generalizes the approach of Towne and Colonius to the linearized Navier-Stokes equations~\cite{Towne_2022_OWNS-P,Kamal_2020_HOWNS,Kamal_2021_OWNS_IO,Kamal_2022_HOWNS,Rigas_2017_OWNS_BL}, where it has been used for linear disturbance evolution in turbulent jets and boundary layers. When applied to the Navier-Stokes equations, the method is now referred to as the OWNS outflow (OWNS-O) approach since it is based on work by Givoli and Neta~\cite{Givoli_2003_NRBC} and Hagstrom and Warburton~\cite{Hagstrom_2004_NRBC} for non-reflective boundary conditions (NRBCs) at ``outflow'' boundaries. For brevity, we will use the OWNS label, but we will perform our analysis on systems of linear first-order hyperbolic equations, and refer to Towne et al.~\cite{Towne_2022_OWNS-P} for the details on generalizing the method to the Navier-Stokes equations.

The \textit{exact} formulation of OWNS-O avoids factoring the dispersion relation by (i) discretizing in the transverse directions to obtain a semi-discrete ordinary differential equation (ODE) in the marching direction~\eqref{eq:HyperbolicPDE_D_char_freq}, (ii) computing the eigendecomposition of the discretized linear operator, $M$ in~\eqref{eq:HyperbolicPDE_D_char_freq}, (iii) using Briggs' criterion~\cite{Briggs_1964_Electron} to classify the eigenvectors as left or right-going, and (iv) using this information to construct a one-way equation for the right-going waves. Since computing the eigendecomposition is computationally expensive, the \textit{approximate} formulation instead uses a \textit{recursive filter} to construct an approximate one-way equation, which converges to the exact one-way equation if its \textit{recursion parameters} are well-chosen. If the location (in the complex plane) and direction (left- or right-going) of all eigenvalues are known, then it is always possible to choose convergent recursion parameters. However, since the recursive filter is used to avoid computing the eigendecomposition of $M$, convergent recursion parameters must be chosen without complete knowledge of the eigenvalues of $M$. Towne and Colonius used the eigenvalues of the Euler equations, linearized about a uniform flow (which can be computed analytically), to inform recursion parameter selection for nonuniform flows. Throughout this paper, we refer to this as \textit{heuristic} recursion parameter selection.

OWNS-O can only be applied to homogeneous (unforced) equations, motivating the development of the OWNS projection (OWNS-P)~\cite{Towne_2022_OWNS-P} and OWNS recursive (OWNS-R)~\cite{Zhu_2021_OWNS-R} approaches, which can be applied to inhomogeneous (forced) equations. We note that OWNS-P and OWNS-R use recursive filters to obtain approximate one-way equations, which differ from each other but are both approximations to the same \textit{exact} one-way equation (which is also different from the \textit{exact} OWNS-O equation). We further note that note that Rudel et al.~\cite{Rudel_2022_Bremmer} have also used recursive filters to construct one-way equations for wave propagation in complex media, which can also be applied to inhomogeneous equations, while Sleeman et al.~\cite{Sleeman_2025_NOWNS_AIAAJ} have developed the nonlinear OWNS (NOWNS) approach, based on OWNS-P, which enables nonlinear disturbance evolution in boundary layers.

Although the \textit{heuristic} recursion parameter selection routine developed by Towne and Colonius has proven effective for the Euler and Navier-Stokes equations, it requires problem-specific parameter tuning (e.g., subsonic and supersonic boundary layer flows require different parameters). As noted above, the eigenvalues of $M$ can be used to construct convergent recursion parameter sets. However, these sets are large, and smaller sets are desirable for accuracy, stability, and efficiency. We therefore formulate a \textit{subset selection} problem that, for a prescribed set size, selects a subset of the eigenvalues of $M$ such that the OWNS approximation error is minimized, enabling automatic recursion parameter selection. Subset selection is NP-hard~\cite{Natarajan_1995_OptimalSubset}, so we solve the optimization problem using a \textit{greedy} algorithm. We demonstrate for linear and nonlinear disturbance evolution in subsonic and supersonic boundary-layer flows (modeled using the Navier-Stokes equations) that the greedy approach yields faster filter convergence than the heuristic approach, while also leading to a reduced computational cost. We further show for nonlinear disturbance evolution in a supersonic boundary layer flow that while the heuristic selection contaminates the solution with numerical error, this problem is alleviated using greedy selection.

We additionally analyze the convergence properties of the OWNS-P and OWNS-R methods, showing that OWNS-P converges faster and that there exist recursion parameter sets for which OWNS-P is fully converged while OWNS-R is not. We demonstrate these properties numerically: our heuristic parameter selection routine yields stable OWNS-P marches but unstable OWNS-R marches, whereas greedy selection produces stable OWNS-R marches.

Section~\ref{sec:OWNS} reviews the OWNS-P and OWNS-R methods and analyzes their stability and convergence properties, while Section~\ref{sec:greedy} presents the greedy algorithm for recursion parameter selection. We present the Navier-Stokes equations in~\ref{sec:NSE}, followed by convergence results at a single station in  Section~\ref{sec:greedySingle}, and spatial marching results in  Section~\ref{sec:greedyMarching}.

\section{OWNS equations}\label{sec:OWNS}

For an arbitrary system of linear first-order hyperbolic equations, Section~\ref{sec:global} develops the global equations we wish to approximate using one-way equations. Section~\ref{sec:Briggs} presents Briggs' criterion~\cite{Briggs_1964_Electron}, which we use to identify right- and left-going modes, while Section~\ref{sec:exact} uses this criterion to develop the exact one-way projection equations. Constructing the projection operator introduces numerical error while also entailing a high computational cost, so OWNS-P~\cite{Towne_2022_OWNS-P} and OWNS-R~\cite{Zhu_2021_OWNS-R} approximate its action using a rapidly-convergent \textit{recursive filter}. We conclude by reviewing OWNS-P and OWNS-R in Sections~\ref{subsec:OWNSP} and~\ref{subsec:OWNSR}, respectively, referring to Towne et al.~\cite{Towne_2022_OWNS-P} and Zhu and Towne~\cite{Zhu_2021_OWNS-R} for details on implementation. Throughout this paper, $\|\bm{b}\|$ for $\bm{b}\in\mathbb{R}^N$ denotes the Euclidean vector norm and $\|A\|$ for $A\in\mathbb{R}^{N\times N}$ denotes its induced matrix norm.

\subsection{Global equations}\label{sec:global}
Consider a system of first-order hyperbolic equations
\begin{subequations}
    \begin{align}
    \frac{\partial q'}{\partial t}+\mathcal{A}\frac{\partial q'}{\partial x}+\sum_{j=2}^{d}\mathcal{B}_j\frac{\partial q'}{\partial y_j}+C q'=f\quad&\mathrm{on}\quad\Omega\times I\label{eq:HyperbolicPDE}\\
    q'=q_0'\quad&\mathrm{on}\quad\Omega\times\{t=0\}\\
    q'=q_{\mathrm{BC}}'\quad&\mathrm{on}\quad\partial\Omega\times I,
    \end{align}
    \label{eq:HyperbolicSystem}
\end{subequations}
in $d$ spatial-dimensions over the domain $\Omega\subset\mathbb{R}^d$ and time interval $I\equiv (0,T]$ for $T>0$, where $\mathcal{A},\mathcal{B}_{j},\mathcal{C}\in\mathbb{R}^{d\times d}$ for $j=2,\dots,d$, and $q':\Omega\times I\to\mathbb{R}^d$ is the solution for initial condition $q_0':\Omega\to\mathbb{R}^d$, boundary condition $q_{\mathrm{BC}}':\partial\Omega\times I\to\mathbb{R}^d$, and forcing $f:\Omega\times I\to\mathbb{R}^d$. If~\eqref{eq:HyperbolicSystem} is hyperbolic, then for all $\gamma_1,\dots,\gamma_d\in\mathbb{R}$ the matrix $\gamma_1\mathcal{A}+\gamma_2\mathcal{B}_2+\cdots+\gamma_d\mathcal{B}_d$ has only real eigenvalues and is diagonalizable. Taking $\gamma_2=\cdots=\gamma_d=0$, we see that $\mathcal{A}$ is diagonalizable with real eigenvalues $\tilde{\mathcal{A}}=\mathcal{T}\mathcal{A}\mathcal{T}^{-1}$ and eigenvectors $\mathcal{T}^{-1}$, which use to define the characteristic variables $\phi=\mathcal{T}q'$. Applying finite differences with appropriate boundary conditions in $(y_2,\dots,y_d)$ to~\eqref{eq:HyperbolicSystem} and transforming to characteristic variables yields
\begin{subequations}
\begin{align}
    \frac{\partial \bm{\phi}'}{\partial t}+\tilde{A}\frac{\partial \bm{\phi}'}{\partial x}+\sum_{j=2}^{d}\tilde{B}_j\mathcal{D}_j\bm{\phi}'+\tilde{C} \bm{\phi}'=\bm{f}_{\phi}\quad&\mathrm{on}\quad[x_L,x_R]\times I\\
    \bm{q}'=\bm{q}_0'\quad&\mathrm{on}\quad[x_L,x_R]\times\{t=0\}\\
    \bm{q}'=\bm{q}_L\quad&\mathrm{on}\quad \{x=x_L\}\times I\\
    \bm{q}'=\bm{q}_R\quad&\mathrm{on}\quad \{x=x_R\}\times I
\end{align}
\label{eq:HyperbolicPDE_D_char}
\end{subequations}
for the difference operator $\mathcal{D}_j$ where $L$ and $R$ denote values associated with the left and right boundaries, respectively. The matrices $A,B_j,C\in\mathbb{R}^{N\times N}$ and the vectors $\bm{q}',\bm{f}\in\mathbb{R}^N$ are the $y_j$-discretized versions of $\mathcal{A},\mathcal{B}_j,\mathcal{C}\in\mathbb{R}^{d\times d}$ and $q',f\in\mathbb{R}^d$, respectively. In turn this yields the matrices $\tilde{A}=TAT^{-1}$, $\tilde{B}_j=TB_jT^{-1}$, $\tilde{C}=TCT^{-1}+TA\partial_x T^{-1}$ and vectors $\bm{f}_{\phi}=T\bm{f}$, $\bm{\phi}=T\bm{q}'$ in characteristic variables. We assume that $\tilde{A}$ is non-singular (the singular case is treated by Towne et al.\cite{Towne_2022_OWNS-P}) so that $\tilde{A}=\diag(A_{++},A_{--})$, where $A_{++}\in\mathbb{R}^{N_+\times N_+}$ and $A_{--}\in\mathbb{R}^{N_-\times N_-}$ are positive and negative diagonal matrices, respectively. We take a Laplace transform in time with Laplace variable $s=\eta+i\omega$ to obtain
\begin{equation}
\frac{d \hat{\bm{\phi}}}{d x} = M(s)\hat{\bm{\phi}} + \hat{\bm{g}},
\quad x \in [x_L, x_R],
\quad \hat{\bm{\phi}}(x_L) = \hat{\bm{\phi}}_{L}, 
\quad \hat{\bm{\phi}}(x_R) = \hat{\bm{\phi}}_{R}.
    \label{eq:HyperbolicPDE_D_char_freq}
\end{equation}
where $\hat{\bm{\phi}}$ and $\hat{\bm{g}}$ represent $\bm{\phi}$ and $\bm{f}_{\phi}$, respectively, in the frequency domain, while $M(s) = -\tilde{A}^{-1} \big(s I + \sum_{j=2}^{d} \mathcal{D}_j \tilde{B}_j + \tilde{C} \big)$. We will ultimately set $\eta=0$
study the long-time stationary solution at a specified forcing frequency $\omega$. For the moment, however, we retain $s$ instead of $i \omega$, allowing 
$\eta\neq0$, which allows us to distinguish between left- and right-going eigenvalues. Since we are only interested in long-time behavior, we have not specified initial conditions in~\eqref{eq:HyperbolicPDE_D_char_freq}. We assume that $M(s)$ is diagonalizable with eigenpairs $\{i\alpha_k(s),\bm{v}_k(s)\}_{k=1}^N$ since this is true for the problems we consider, while also simplifying the exposition (the defective case is handled by Towne and Colonius~\cite{Towne_2015_OWNS-O}).

\subsection{Briggs' criterion}\label{sec:Briggs}

Briggs' criterion (Definition~\ref{def:Briggs}) classifies eigenpairs as either left- or right-going, while Proposition~\ref{prop:Briggs} shows that $M(s)$ has precisely $N_+$ and $N_-$ right- and left-going eigenvalues, respectively.

\begin{definition}[Briggs' criterion~\cite{Briggs_1964_Electron}]
    Consider a wave (eigenvector) with complex wave-number (eigenvalue) $i\alpha(s)$. For fixed $\omega$, this wave is right-going if $\lim_{\eta\to\infty}\mathcal{I}[\alpha(s)]\to\infty$, while it is left-going if $\lim_{\eta\to\infty}\mathcal{I}[\alpha(s)]\to-\infty$. (Recall that $s=i\omega+\eta$.)
    \label{def:Briggs}
\end{definition}

\begin{proposition}\label{prop:Briggs}
For $\mathcal{R}(s)>0$, the matrix $M(s)$ has precisely $N_{+}$ and $N_{-}$ right- and left-going eigenvalues, respectively, according on Briggs' criterion~\cite{Towne_2015_OWNS-O}.
\end{proposition}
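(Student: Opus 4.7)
The plan is to perform a large-$\eta$ asymptotic analysis of the spectrum of $M(s)$, which is exactly the regime invoked by Briggs' criterion. Writing $s = i\omega + \eta$ and splitting
\begin{equation*}
M(s) = s\tilde{A}^{-1} + K, \qquad K = \tilde{A}^{-1}\Big(\sum_{j=2}^{d} i\omega_j \tilde{B}_j + \tilde{C}\Big),
\end{equation*}
where $K$ is independent of $s$, reduces the question to a standard perturbation of the diagonal matrix $s\tilde{A}^{-1}$.

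Because $\tilde{A}$ is block-diagonal with $A_{++}>0$ ($N_+$ positive diagonal entries) and $A_{--}<0$ ($N_-$ negative diagonal entries), the $N$ eigenvalues of $s\tilde{A}^{-1}$ lie on the two rays $\{s/\tilde{a}:\tilde{a}>0\}$ and $\{s/\tilde{a}:\tilde{a}<0\}$. For large $\eta$, the real parts of these two groups diverge to $+\infty$ and $-\infty$ respectively, and the two clusters are separated by an $O(\eta)$ gap. The additive perturbation by the bounded $K$ shifts each eigenvalue of $M(s)$ by at most $\|K\|$ from an eigenvalue of $s\tilde{A}^{-1}$ (Bauer-Fike, using that $s\tilde{A}^{-1}$ is normal), so once $\eta$ is sufficiently large the perturbation cannot bridge the two groups. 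Hence exactly $N_+$ eigenvalues $\mu$ of $M(s)$ satisfy $\mathcal{R}(\mu)\to+\infty$ and the remaining $N_-$ satisfy $\mathcal{R}(\mu)\to-\infty$ as $\eta\to\infty$.

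The last step is to translate this count into Briggs' language. A spatial eigenmode $e^{-\mu x}$ of~\eqref{eq:HyperbolicPDE_D_char_freq} corresponds to a wave $e^{i\alpha x}$ with $\alpha = i\mu$, so $\alpha_i = \mathcal{R}(\mu)$. Thus the $N_+$ eigenvalues with $\mathcal{R}(\mu)\to+\infty$ are precisely those with $\alpha_i\to+\infty$, which by definition~\ref{def:Briggs} are downstream-going, and the $N_-$ eigenvalues with $\mathcal{R}(\mu)\to-\infty$ are upstream-going.

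The main obstacle, in my view, is not the asymptotic count but ensuring that the branch assignment obtained at $\eta=\infty$ is well-defined for arbitrary $s$ with $\mathcal{R}(s)>0$. Briggs' criterion tacitly requires analytically continuing each eigenvalue from the $s$ of interest to the $\eta\to\infty$ limit without the branches crossing, so one must rule out the possibility that two branches coalesce, swap labels, or cross the imaginary axis $\mathcal{R}(\mu)=0$ along the way. The stated diagonalizability of $M(s)$ on the right half-plane, together with the asymptotic separation above, delivers this; a rigorous argument would use continuity of the spectrum and, if desired, invoke hyperbolic well-posedness to exclude purely imaginary eigenvalues of $M(s)$ while $\mathcal{R}(s)>0$, thereby guaranteeing that the $N_+/N_-$ split found in the Briggs limit propagates back to every $s$ in the right half-plane.
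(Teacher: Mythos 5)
Your proof is correct and follows essentially the same route as the paper's: both rest on the leading-order balance $\mu\sim s/\tilde{a}_k$ as $\mathcal{R}(s)\to\infty$, which yields $N_+$ eigenvalues with $\alpha_i\to+\infty$ and $N_-$ with $\alpha_i\to-\infty$, together with continuity of the spectrum in $s$ to carry the classification back over the right half-plane. Your write-up is merely more explicit where the paper is terse --- the splitting $M=s\tilde{A}^{-1}+K$ with a Bauer--Fike bound, and the remark about branch tracking, versus the paper's direct appeal to the characteristic determinant and its limit.
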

\begin{proof}
    The eigenvalues are the solutions to the characteristic equation
    \[
    \det(\tilde{A}^{-1})^{-1}\det(M-i\alpha I)
    =\det(-sI-\sum_{j=2}^{d}\mathcal{D}_j \tilde{B}_j-i\alpha \tilde{A}-\tilde{C}).
    \]
    The eigenvalues are continuous functions of $s$ so taking the limit $\eta\to\infty$ yields $N_{+}$ eigenvalues with $\mathcal{I}[\alpha(s)]\to\infty$ and $N_{-}$ eigenvalues with $\mathcal{I}[\alpha(s)]\to-\infty$.
\end{proof}

For the rest of this paper, we take $\eta=0$, as we only need $\eta\neq0$ to apply Briggs' criterion. We partition $M$ into left- and right-going blocks as
\begin{equation}
    M
    =
    \begin{bmatrix}
        V_+ & V_-
    \end{bmatrix}
    \begin{bmatrix}
        D_{++} & 0\\
        0 & D_{--}
    \end{bmatrix}
    \begin{bmatrix}
        V_+ & V_-
    \end{bmatrix}^{-1},
\end{equation}
where the columns of $V_+\in\mathbb{C}^{N\times N_+}$ and $V_-\in\mathbb{C}^{N\times N_-}$ the eigenvectors associated with the right- and left-going eigenvalues of $M$, and we have dropped the argument $s$ for brevity. Next we introduce the coefficients $\hat{\bm{\psi}}_+\in\mathbb{C}^{N_+}$ and $\hat{\bm{\psi}}_-\in\mathbb{C}^{N_-}$ such that $\hat{\bm{\phi}} = V_+\hat{\bm{\psi}}_+ + V_-\hat{\bm{\psi}}_-$, and we note that for any eigenvector $\bm{v}_k$, we have $\bm{v}_k=V\hat{\bm{\psi}}^{(k)}$ with $\hat{\psi}_l^{(k)}=\delta_{lk}$ for $k,l=1,\dots,N$ , where $\delta_{lk}$ is the Kronecker delta. For notational convenience, we define the sets $i^{(+)}=\{1,\dots,N_+\}$ and $i^{(-)}=\{N_++1,\dots,N\}$ to denote indices associated with right- and left-going modes, respectively. The eigenvalues need not be unique, so we introduce $\tilde{N}_+\leq N_+$ and $\tilde{N}_-\leq N_-$ to denote the number of unique right- and left-going eigenvalues, respectively. We additionally introduce the subsets
$\tilde{i}^{(+)}=\{ \tilde{i}^{(+)}(1),\dots,\tilde{i}^{(+)}(\tilde{N}_+)\}\subset i^{(+)}$ and
$\tilde{i}^{(-)}=\{ \tilde{i}^{(-)}(1),\dots,\tilde{i}^{(-)}(\tilde{N}_-)\}\subset i^{(-)}$ which denote the indices of the unique right- and left-going eigenvalues.

\subsection{Exact projection equations}\label{sec:exact}

This section presents the one-way approximations to~\eqref{eq:HyperbolicPDE_D_char_freq} developed in Towne et al.~\cite{Towne_2022_OWNS-P}, which enables it to be solved as a spatial initial value problem in $x$. Definition~\ref{def:proj_mat} introduces a projection matrix $P$, used in Definition~\ref{def:proj_eq} to define the one-way equations for left- and right-going modes. Proposition~\ref{prop:projection-consistent-2} (Towne et al.~\cite{Towne_2022_OWNS-P}) shows that~\eqref{eq:HyperbolicPDE_D_char_freq} can be solved exactly using the two one-way equations~\eqref{eq:lowns} if and only if $\partial_x P = 0$. Nevertheless, as discussed in Towne et al., these one-way equations can accurately model disturbance evolution in spatially-developing flows (e.g., boundary layers and jets) even when $\partial_x P \neq 0$, because the flow varies slowly in $x$ and predominantly evolves in a single direction (i.e., either the left- or right-going mode dominates).

\begin{definition}[Towne et al.~\cite{Towne_2022_OWNS-P}]\label{def:proj_mat}
The projection matrix
\begin{equation}
P = V E V^{-1},\quad E=\begin{bmatrix}
    I_{++} & 0\\
    0 & 0
\end{bmatrix}
\end{equation}
partitions $\hat{\bm{\phi}}$ into right- and left-going components as $\hat{\bm{\phi}}'=P\hat{\bm{\phi}}=V_+\psi_+$ and $\hat{\bm{\phi}}''=[I-P]\hat{\bm{\phi}}=V_-\hat{\bm{\psi}}$, respectively, where $I_{++}\in\mathbb{R}^{N_+\times N_+}$ is the identity.
\end{definition}
\begin{proposition}[Towne et al.~\cite{Towne_2022_OWNS-P}]
    $P$ is a projection matrix.
\end{proposition}
\begin{proof}
    $E^2=E$ so that $P^2=VEV^{-1}VEV^{-1}=VEV^{-1}=P$.
\end{proof}
\begin{proposition}[Towne et al.~\cite{Towne_2022_OWNS-P}]
    $P$ commutes with $M$.
    \label{prop:commutativity}
\end{proposition}
\begin{proof}
    $ED=DE$ so that $PM=VEDV^{-1}=VDEV^{-1}=MP$.
\end{proof}
\begin{definition}[Exact one-way projection equations; Towne et al.~\cite{Towne_2022_OWNS-P}]\label{def:proj_eq}
The projection matrix $P$ yields the one-way equations for right- and left-going modes:
\begin{subequations}
\begin{align}
\frac{\partial \hat{\bm{\phi}}'}{\partial x}
= P[M\hat{\bm{\phi}}' + \hat{\bm{g}}],
\quad x \in [x_L,x_R],
\quad \hat{\bm{\phi}}'(x_L) = \hat{\bm{\phi}}_L', \label{eq:lowns1} \\
\frac{\partial \hat{\bm{\phi}}''}{\partial x}
= (I-P)[M\hat{\bm{\phi}}'' + \hat{\bm{g}}],
\quad x \in [x_L,x_R],
\quad \hat{\bm{\phi}}''(x_R) = \hat{\bm{\phi}}_R''. \label{eq:lowns2}
\end{align}
\label{eq:lowns}
\end{subequations}
\end{definition}

\begin{proposition}[Towne et al.~\cite{Towne_2022_OWNS-P}] The exact one-way projection equations are well-posed as spatial initial value problems according to the criterion of Kreiss~\cite{Kreiss_1970_IBVP}.
\end{proposition}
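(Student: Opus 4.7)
The plan is to reduce each projected equation to a decoupled system of scalar ODEs using the commutativity of $P$ with $M$ (proposition~\ref{prop:commutativity}), and then verify the Kreiss resolvent bound on the surviving modes using Briggs' criterion.

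First, I would change variables to the eigenbasis of $M$: since $PM = V E D V^{-1}$, writing $\hat{\bm{\phi}}' = V\hat{\bm{\psi}}'$ reduces (\ref{eq:lowns1}) to $d\hat{\bm{\psi}}'_+/dx = D_{++}\hat{\bm{\psi}}'_+ + (V^{-1}\hat{\bm{g}})_+$ on the downstream subspace and $d\hat{\bm{\psi}}'_-/dx = 0$ on the upstream subspace. Because $\hat{\bm{\phi}}'$ lies in the range of $P$, the component $\hat{\bm{\psi}}'_-$ is identically zero and the projected system collapses to the downstream block alone. An identical argument with $I-P$ reduces (\ref{eq:lowns2}) to the upstream block, marched in the $-x$ direction.

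Next, I would apply Briggs' criterion (definition~\ref{def:Briggs}) together with proposition~\ref{prop:Briggs} to control each decoupled scalar equation uniformly in $s$. For every $s$ in the right half plane, the downstream eigenvalues of $M$ lie on the correct side of the complex $\alpha$-plane, so the exponential factor $e^{\pm i\alpha x}$ in the associated scalar solution admits a bound of the form $|e^{\pm i\alpha x}| \le \exp(C\,\mathcal{R}(s)\, x)$ with $C$ independent of the transverse wavenumbers; an analogous bound applies to the upstream branch retained by (\ref{eq:lowns2}). These are precisely the resolvent bounds that Kreiss~\cite{Kreiss_1970_IBVP} requires of $(\partial_x - PM)^{-1}$ (and of $(\partial_x - [I-P]M)^{-1}$) for well-posedness of the spatial IVP, and a standard Paley--Wiener/Laplace inversion then delivers well-posedness in physical time.

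The hard part will be the second step: Briggs' criterion is only an asymptotic statement as $\mathcal{R}(s)\to\infty$, whereas Kreiss' criterion demands a uniform bound over the entire right half plane. Closing this gap requires a continuity/homotopy argument showing that no downstream eigenvalue branch of $M(s)$ crosses the real $\alpha$-axis as $s$ varies over $\mathcal{R}(s)>0$, so that the Briggs classification persists from its asymptotic limit all the way to the imaginary $s$-axis. That homotopy argument, enabled by the hyperbolicity of (\ref{eq:HyperbolicPDE}) and the non-singularity of $\tilde{A}$, is the technical heart of the proof; the remainder is algebraic bookkeeping.
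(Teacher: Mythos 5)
First, for context: the paper's own ``proof'' of this proposition is purely a citation --- it points to Towne and Colonius~\cite{Towne_2015_OWNS-O}, Towne et al.~\cite{Towne_2022_OWNS-P}, Kreiss~\cite{Kreiss_1970_IBVP}, and Higdon~\cite{Higdon_1986_Kreiss} and proves nothing in-line. So your attempt to actually construct the argument goes beyond the paper, and your first step (use proposition~\ref{prop:commutativity} to diagonalize, observe that $\hat{\bm{\psi}}'_-\equiv 0$ on the range of $P$, and reduce \eqref{eq:lowns1}--\eqref{eq:lowns2} to decoupled scalar modes) is the standard reduction those references perform. You have also correctly identified where the work lies: Briggs' criterion (definition~\ref{def:Briggs}) is only an asymptotic classification as $\mathcal{R}(s)\to\infty$, whereas Kreiss demands uniform estimates.

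The gap is in how you propose to close that step, and it is not merely an omission --- the statement you plan to prove is false in the generality needed. You claim the downstream branches of $M(s)$ ``lie on the correct side of the complex $\alpha$-plane'' for every $s$ with $\mathcal{R}(s)>0$, and that the homotopy shows ``no downstream eigenvalue branch crosses the real $\alpha$-axis.'' For convectively unstable flows --- which are exactly the flows this paper treats (Tollmien--Schlichting waves, Mack's second mode) --- downstream-going branches \emph{do} cross into $\mathcal{I}(\alpha)<0$ as $\mathcal{R}(s)$ decreases toward zero: that spatial growth is the entire content of the $N$-factor computations later in the paper. Non-crossing is only guaranteed for $\mathcal{R}(s)$ exceeding the maximal temporal growth rate. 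What Kreiss' criterion actually requires is not a sign condition but a \emph{uniform bound on the spatial growth rate}: an estimate $\mathcal{I}(\alpha_{i^{(+)}})\ge -C$, i.e.\ $|e^{i\alpha x}|\le e^{Cx}$, with $C$ independent of the transverse wavenumbers $\omega_j$ and of $s$ in $\mathcal{R}(s)\ge\eta_0$. For the hyperbolic structure $M(s)=\tilde{A}^{-1}(sI+\sum_j i\omega_j\tilde{B}_j+\tilde{C})$ this follows from the asymptotics $\alpha\sim is/a_k$ (so $\mathcal{I}(\alpha)\sim\mathcal{R}(s)/a_k>0$ for large $|s|+|\omega_j|$) together with a compactness argument on the remaining bounded region --- not from a non-crossing homotopy. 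Replace the non-crossing claim with this uniform-growth-bound statement and your outline becomes the argument given in \cite{Towne_2015_OWNS-O} and reviewed by Higdon~\cite{Higdon_1986_Kreiss}; as written, the ``technical heart'' you defer cannot be completed.
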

\begin{proof}
    See the discussion by Towne and Colonius~\cite{Towne_2015_OWNS-O} and Towne et al.~\cite{Towne_2022_OWNS-P}, the criterion developed by Kreiss~\cite{Kreiss_1970_IBVP}, and the review of this criterion by Higdon~\cite{Higdon_1986_Kreiss}.
\end{proof}

\begin{proposition}[Towne et al.~\cite{Towne_2022_OWNS-P}]\label{prop:projection-consistent-2}
    If $\hat{\bm{\phi}}$ satisfies~\eqref{eq:HyperbolicPDE_D_char_freq} with $\hat{\bm{\phi}}(x_L)=\hat{\bm{\phi}}_L$ and $\hat{\bm{\phi}}(x_R)=\hat{\bm{\phi}}_R$,    
    then $P\hat{\bm{\phi}}$ and $[I-P]\hat{\bm{\phi}}$ satisfy~\eqref{eq:lowns1} and~\eqref{eq:lowns2} with $\hat{\bm{\phi}}'(x_L)=P\hat{\bm{\phi}}_L$ and $\hat{\bm{\phi}}''(x_R)=[I-P]\hat{\bm{\phi}}_R$ if and only if $\partial P/\partial x=0$~\cite{Towne_2022_OWNS-P}.
\end{proposition}
\begin{proof}
  See Towne et al.~\cite{Towne_2022_OWNS-P}.
\end{proof}

\subsection{Approximate projection using OWNS-P}~\label{subsec:OWNSP}
Definition~\ref{def:filter-p} introduces the OWNS-P filter, while Proposition~\ref{prop:proj} recasts it in a matrix form, which is shown to be a projection in Proposition~\ref{prop:approxProjectProjectMat}. Proposition~\ref{prop:approxProjectConvergence} provides a criterion for filter convergence, while Proposition~\ref{prop:minimalOWNS-P} provides a necessary and sufficient condition for convergent recursion parameters to exist. We bound the error introduced by this approximation in Proposition~\ref{prop:errOWNSP}, while Proposition~\ref{prop:commMatOWNS-P} shows that unlike the exact projection matrix, the approximate projection matrix does not commute with $M$, unless the approximation is fully converged.

\begin{definition}[OWNS-P filter; Towne et al.~\cite{Towne_2022_OWNS-P}]\label{def:filter-p}
    Given the solution $\hat{\bm{\phi}}$, OWNS-P returns the filtered solution $\hat{\bm{\phi}}^0$ by solving
    \begin{subequations}
    \begin{align}
        \hat{\bm{\phi}}_+^{-N_\beta}&=0 \label{eq:filterEndPlus}\\
        (M-i\beta_-^j I)\hat{\bm{\phi}}^{-j}-(M-i\beta_+^j I)\hat{\bm{\phi}}^{-j-1}&=0,\quad j=1,\dots,N_\beta-1,\label{eq:filterTop}\\
        (M-i\beta_-^0 I)\hat{\bm{\phi}}^{0}-(M-i\beta_+^0 I)\hat{\bm{\phi}}^{-1}&=(M-i\beta_-^0I)\hat{\bm{\phi}},\label{eq:filterMiddle}\\
        (M-i\beta_+^j I)\hat{\bm{\phi}}^{j}-(M-i\beta_-^j I)\hat{\bm{\phi}}^{j+1}&=0,\quad j=0,\dots,N_\beta-1,\label{eq:filterBottom}\\
        \hat{\bm{\phi}}_-^{N_\beta}&=0 \label{eq:filterEndMinus},
    \end{align}
    \label{eq:filter-p}
    \end{subequations}
    using the recursion parameters $\{\beta_+^j,\beta_-^j\}_{j=0}^{N_\beta-1}$ and auxiliary variables $\{\hat{\bm{\phi}}^j\}_{j=-N_\beta}^{N_\beta}$.
\end{definition}
\begin{proposition}[Towne et al.~\cite{Towne_2022_OWNS-P}]\label{prop:proj} The recursive filter~\eqref{eq:filter-p} can be recast in matrix form
\begin{subequations}
    \begin{equation}
    P_{N_\beta}
    =
    V R_{N_\beta}E R_{N_\beta}^{-1}V^{-1},
    \label{eq:approxProject}
    \end{equation}
    for
    \begin{equation}
    R_{N_\beta}
    =
    \begin{bmatrix}
        I_{++} & F_{++}V_{++}^{-1}V_{+-}F_{--}^{-1}\\
        F_{--}^{-1}V_{--}^{-1}V_{-+}F_{++} & I_{--}
    \end{bmatrix}^{-1},
    \label{eq:approxProjectRMat}
    \end{equation}
    where $F$ is the diagonal matrix
    \begin{equation}
        F
        =\begin{bmatrix}
            F_{++} & 0\\
            0 & F_{--}
        \end{bmatrix},\quad
        F_k=\prod_{j=0}^{N_\beta-1} \frac{\alpha_k - \beta_+^j}{\alpha_k- \beta_-^{j}},\quad k=1,\dots,N.
        \label{def:matF}
    \end{equation}
    \end{subequations}
\end{proposition}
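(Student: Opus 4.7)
The plan is to transform the filter to the eigenbasis of $M$ and then solve the resulting two-sided scalar recursions in closed form. Introducing $\hat{\bm{\psi}}^j = V^{-1}\hat{\bm{\phi}}^j$ partitioned conformally with the block structure $D=\mathrm{diag}(D_{++},D_{--})$, each of the chain identities~\eqref{eq:filterTop} and~\eqref{eq:filterBottom} decouples across eigen-directions because $V^{-1}MV=D$ is diagonal. Specifically,~\eqref{eq:filterTop} reduces to scalar recursions of the form $(d_k-i\beta_-^j)\psi_k^{-j}=(d_k-i\beta_+^j)\psi_k^{-j-1}$, and analogously for~\eqref{eq:filterBottom}. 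Unwrapping these from $j=N_\beta-1$ inward yields cumulative rational products that are exactly the diagonal entries of $F$ defined in~\eqref{def:matF}, so each chain contributes a multiplication by $F_{++}$ or $F_{--}^{-1}$ to the corresponding half of $\hat{\bm{\psi}}^0$.

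Next I would translate the truncation conditions~\eqref{eq:filterEndPlus} and~\eqref{eq:filterEndMinus} into the eigenbasis. These conditions are stated in the characteristic-speed partition of $\tilde{A}$, so writing $V$ as the $2\times 2$ block matrix with entries $V_{++},V_{+-},V_{-+},V_{--}$ — outer indices referring to the sign of characteristic speed and inner indices to downstream/upstream per Briggs — converts $\hat{\bm{\phi}}_+^{-N_\beta}=0$ into $V_{++}\hat{\bm{\psi}}_+^{-N_\beta}+V_{+-}\hat{\bm{\psi}}_-^{-N_\beta}=0$, and similarly at the other end. This is the mechanism by which the off-diagonal blocks $V_{++}^{-1}V_{+-}$ and $V_{--}^{-1}V_{-+}$ appear. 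Combining with the closed-form chain solutions from the previous step expresses $\hat{\bm{\psi}}^{-1}$ and $\hat{\bm{\psi}}^{1}$ as affine functions of $\hat{\bm{\psi}}^0$, where the cross-coupling between the two eigenbasis halves is carried by these $V$-blocks scaled by $F_{\pm\pm}^{\pm 1}$.

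Substituting these into the coupling identity~\eqref{eq:filterMiddle} collapses the entire chain into a single block $2\times 2$ linear system relating $\hat{\bm{\psi}}^0$ to the input $\hat{\bm{\psi}}=V^{-1}\hat{\bm{\phi}}$; its coefficient matrix is exactly $R_{N_\beta}^{-1}$ from~\eqref{eq:approxProjectRMat}. Identifying the approximate projection output in the eigenbasis as the downstream part of $R_{N_\beta}\hat{\bm{\psi}}$ — that is, applying $E$ to keep only the $+$-block — and transforming back via $V$ then yields $P_{N_\beta}=V R_{N_\beta} E R_{N_\beta}^{-1} V^{-1}$, matching~\eqref{eq:approxProject}.

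The main obstacle will be the careful bookkeeping of the two distinct $\pm$ partitions in play simultaneously: the characteristic-speed partition from $\tilde{A}$, which governs the truncation conditions~\eqref{eq:filterEndPlus} and~\eqref{eq:filterEndMinus}, versus the Briggs eigenvalue partition of $M$, which governs the recursion itself. Their cross-coupling through the blocks of $V$ is precisely what produces the non-trivial off-diagonal entries $F_{++}V_{++}^{-1}V_{+-}F_{--}^{-1}$ in $R_{N_\beta}^{-1}$, and a sign or index slip here would misidentify the block. To reduce that risk, I would first verify the $N_\beta=1$ case by direct computation, confirm the block structure, and then extend to general $N_\beta$ by induction on the chain length, using the fact that adding a further stage only multiplies the diagonal factor $F$ by one more rational factor while leaving the block-assembly geometry intact.
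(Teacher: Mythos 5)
Your proposal is correct and follows essentially the same route as the paper's proof: diagonalize the filter via $V$, telescope the scalar recursions to obtain the diagonal factors $F_{++}$ and $F_{--}^{-1}$, convert the truncation conditions $\hat{\bm{\phi}}_+^{-N_\beta}=0$ and $\hat{\bm{\phi}}_-^{N_\beta}=0$ into cross-couplings through $V_{++}^{-1}V_{+-}$ and $V_{--}^{-1}V_{-+}$, and assemble the block $2\times2$ system $R_{N_\beta}^{-1}\bm{\psi}^0=ER_{N_\beta}^{-1}\bm{\psi}$. Your careful distinction between the characteristic-speed partition and the Briggs partition is exactly the bookkeeping the paper's argument relies on.
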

\begin{proof}
    See Towne et al.~\cite{Towne_2022_OWNS-P}.
\end{proof}

\begin{proposition}[Towne~\cite{Towne_2016_Thesis}]\label{prop:approxProjectProjectMat}
$P_{N_\beta}$ is a projection matrix.
\end{proposition}
\begin{proof}
$P_{N_\beta}^2
=VR_{N_\beta}E R_{N_\beta}^{-1}V^{-1}VR_{N_\beta}ER_{N_\beta}^{-1}V^{-1}
=P_{N_\beta}$.
\end{proof}

\begin{proposition}[Towne et al.~\cite{Towne_2022_OWNS-P}]\label{prop:approxProjectConvergence} $P_{N_\beta}\to P$ if and only if
    \begin{equation}
    \lim_{N_\beta\to\infty}
    \prod_{j=0}^{N_\beta-1}\frac{|\alpha_m-\beta_{+}^{j}|}
    {|\alpha_m-\beta_-^j|}
    \frac{|\alpha_n-\beta_{-}^{j}|}
    {|\alpha_n-\beta_+^j|}=0,\quad\forall(m,n)\in i^{(+)}\times i^{(-)}.
    \label{eq:fConv}
    \end{equation}
\end{proposition}
\begin{proof}
    See Towne et al.~\cite{Towne_2022_OWNS-P}.
\end{proof}

\begin{proposition}[Towne et al.~\cite{Towne_2022_OWNS-P}]\label{prop:minimalOWNS-P}
\begin{subequations}
    Recursion parameters such that $P_{N_\beta}\to P$ exist if and only if $\alpha_m\neq\alpha_n$ for all $(m,n)\in i^{(+)}\times i^{(-)}$. If $\tilde{N}_+\leq \tilde{N}_-$, then take $N_\beta=\tilde{N}_+$ with
    \begin{equation}
        \beta_+^{j-1}=\alpha_{\tilde{i}^{(+)}(j)},\quad
        \beta_-^{j-1}\neq\alpha_{\tilde{i}^{(+)}(j)},\quad
        j=1,\dots,\tilde{N}_+,
        \label{eq:owns-p-params_plus}
    \end{equation}
    while if $\tilde{N}_-<\tilde{N}_+$, then take $N_\beta=\tilde{N}_-$ with
    \begin{equation}
        \beta_-^{j-1}=\alpha_{\tilde{i}^{(-)}(j)},\quad
        \beta_+^{j-1}\neq\alpha_{\tilde{i}^{(-)}(j)},\quad
        j=1,\dots,\tilde{N}_-.
        \label{eq:owns-p-params_minus}
    \end{equation}
\end{subequations}
\end{proposition}
\begin{proof}
    See Towne et al.~\cite{Towne_2022_OWNS-P} or Appendix~\ref{app:Proofs}.
\end{proof}

Proposition~\ref{prop:errOWNSP} bounds the error in the OWNS-P approximation, $\|P-P_{N_\beta}\|$, for small $\|F_{++}\|\|F_{--}^{-1}\|$. We use this error bound to define our objective function for the subset selection problem~\eqref{eq:subsetSelection}.

\begin{proposition}\label{prop:errOWNSP}
    If $\|F_{++}\|\|F_{--}^{-1}\|<\epsilon$ where
    \begin{equation}
        \epsilon \equiv \min\{\hat{\epsilon},\|V_{++}^{-1}V_{+-}\|^{-1/2}\|V_{--}^{-1}V_{-+}\|^{-1/2}\}
    \label{eq:errBoundEpsilon}
    \end{equation}
    for small $\hat{\epsilon}>0$, then the OWNS-P error is bounded as
    \begin{equation}
        \|P_{N_\beta}-P\|\leq \|V\|\|F_{++}\|\|F_{--}^{-1}\|\big(\|V_{++}^{-1}V_{+-}\|+\|V_{--}^{-1}V_{-+}\|\big)\|V^{-1}\|+\mathcal{O}(\epsilon^2).
        \label{eq:errOWNSP}
    \end{equation}
\end{proposition}
\begin{proof}
    See Appendix~\ref{app:Proofs}.
\end{proof}

\begin{proposition}~\label{prop:greedy-p}
    If $\alpha_m\in\{\beta_+^j\}_{j=0}^{N_\beta-1}$ for any $m\in i^{(+)}$, then $P_{N_\beta}\bm{v}_m=\bm{v}_m$; if $\alpha_n\in\{\beta_-^j\}_{j=0}^{N_\beta-1}$ for any $n\in i^{(-)}$, then $P_{N_\beta}\bm{v}_n=0$.
\end{proposition}

\begin{proof}
    Since $(M-i\beta I)\hat{\bm{\phi}}=V(D-i\beta I)\bm{\psi}$ where $V$ has full rank and $(D-i\beta I)$, we diagonalize~\eqref{eq:filterTop} through~\eqref{eq:filterBottom} as
    \begin{subequations}
    \begin{align}
        (\alpha_k-\beta_-^j)\hat{\psi}_{k}^{-j}-(\alpha_k-\beta_+^j)\hat{\psi}_{k}^{-j-1}&=0,\quad j=1,\dots,N_\beta-1,\label{eq:OWNS-P-diag-minus}\\
        (\alpha_k-\beta_-^0)\hat{\psi}_{k}^{0}-(\alpha_k-\beta_+^0)\hat{\psi}_{k}^{-1}&=(\alpha_k-\beta_-^0)\hat{\psi}_k,\label{eq:OWNS-P-diag-zero}\\
        (\alpha_k-\beta_+^j)\hat{\psi}_{k}^{j}-(\alpha_k-\beta_-^j)\hat{\psi}_{k}^{j+1}&=0,\quad j=0,\dots,N_\beta-1\label{eq:OWNS-P-diag-plus},
    \end{align}
    \end{subequations}
    for $k=1,\dots,N$. Using~\eqref{eq:OWNS-P-diag-minus} and~\eqref{eq:OWNS-P-diag-zero}, we see that
    \[
    \hat{\psi}_m^{0}-\hat{\psi}_m=\prod_{j=0}^{N_\beta-1}\frac{\alpha_m - \beta_+^j}{\alpha_m- \beta_-^{j}}\hat{\psi}_m^{-N_\beta}=0,
    \]
    since $\alpha_m\in\{\beta_+^j\}_{j=0}^{N_\beta-1}$. For $\bm{v}_m$, recall that $\hat{\psi}_m^{(l)}=\delta_{lm}$, so that $\hat{\psi}_m^{(m)0}=\hat{\psi}_m^{(m)}$ implies $P_{N_\beta}\bm{v}_m=\bm{v}_m$. Using~\eqref{eq:OWNS-P-diag-zero} and~\eqref{eq:OWNS-P-diag-plus}, we see that
    \[
    \hat{\psi}_n^0=\prod_{j=0}^{N_\beta-1}\frac{\alpha_n - \beta_-^j}{\alpha_n- \beta_+^{j}}\hat{\psi}_n^{ N_\beta}=0
    \]
    since $\alpha_n\in\{\beta_-^j\}_{j=0}^{N_\beta-1}$, so that for $\bm{v}_n$, $\hat{\psi}_n^{(n)0}=0$ implies $P_{N_\beta}\bm{v}_n=0$ since $\hat{\psi}_n^{(l)}=\delta_{ln}$.
\end{proof}

\begin{remark}
Proposition~\ref{prop:errOWNSP} shows that the error $\|P_{N_\beta}-P\|$ scales with $\|F_{++}\|\|F_{--}\|$ for small $\|F_{++}\|\|F_{--}\|$, while Proposition~\ref{prop:greedy-p} shows it is always possible to achieve zero error for any eigenvector by using its eigenvalue as a recursion parameter.
\end{remark}

\begin{proposition}\label{prop:commMatOWNS-P}
    $P_{N_\beta}$ commutes with $M$ if and only if $\|F_{++}V_{++}^{-1}V_{+-}F_{--}^{-1}\|\to0$ and $\|F_{--}^{-1}V_{--}^{-1}V_{-+}F_{++}\|\to0$.
\end{proposition}
\begin{proof}
    If $\|F_{++}V_{++}^{-1}V_{+-}F_{--}^{-1}\|\to0$ and $\|F_{--}^{-1}V_{--}^{-1}V_{-+}F_{++}\|\to0$, then $R_{N_\beta}^{-1}\to I$ and $P_{N_\beta}M=PM=MP=MP_{N_\beta}$. Otherwise, $R_{N_\beta}^{-1}D\neq D R_{N_\beta}$, and $P_{N_\beta}$ does not commute with $M$.
\end{proof}

\begin{remark}
Proposition~\ref{prop:approxProjectProjectMat} establishes that $P_{N_\beta}$ is always a projection matrix, while Proposition~\ref{prop:commMatOWNS-P}, in conjunction with Proposition~\ref{prop:approxProjectConvergence}, shows that although $P_{N_\beta}$ approximates $P$, it only commutes with $M$ if the approximation is fully converged. Towne~\cite{Towne_2016_Thesis} proves that OWNS-P yields a projection matrix, but does not discuss whether it commutes with $M$. Similarly, Zhu and Towne~\cite{Zhu_2021_OWNS-R} demonstrate that OWNS-P approximates the eigenvectors in $P$, while OWNS-R approximates the eigenvalues, yet they do not address whether these approximations commute with $M$ or form projection operators.
\end{remark}

\subsection{Approximate projection using OWNS-R}\label{subsec:OWNSR}

OWNS-R constructs approximations to $P$ with a reduced computational cost relative to OWNS-P. Definition~\ref{def:filter-r} presents the OWNS-R recursive filter, which is recast in matrix form in Proposition~\ref{prop:owns-r-implementation}. In contrast to OWNS-P, Proposition~\ref{prop:projMatOWNS-R} shows that the resulting matrix is not generally a projection matrix, while Proposition~\ref{prop:commMatOWNS-R} shows that it always commutes with $M$. Proposition~\ref{prop:OWNS-R-converge} provides a criterion for convergence, while Proposition~\ref{prop:minimalOWNS-R} provides a necessary and sufficient condition for convergent parameters to exist. While Zhu and Towne~\cite{Zhu_2021_OWNS-R} suggested that OWNS-R convergence can be guaranteed using the same recursion parameters as OWNS-P, we show in Proposition~\ref{prop:minimalOWNS-R-false} that this is not true. In addition, Proposition~\ref{prop:owns-r-repeat-blowup} shows that repeated applications of the OWNS-R matrix leads to unbounded growth or decay unless the filter is fully converged.

\begin{definition}[OWNS-R filter; Zhu and Towne~\cite{Zhu_2021_OWNS-R}]\label{def:filter-r}
    Given the solution $\hat{\bm{\phi}}$, OWNS-R returns the filtered solution $\hat{\bm{\phi}}^{N_\beta}$ by solving
    \begin{subequations}
        \begin{align}
            \hat{\bm{\phi}}^0&=\frac{1}{h}\hat{\bm{\phi}},\\
            (M-i\beta_*^j)\hat{\bm{\phi}}^j
            &=
            (M-i\beta_-^j)\hat{\bm{\phi}}^{j-1},\quad j=1,\dots,N_\beta,
        \end{align}
        \label{eq:filter-r}
    \end{subequations}
         where $\{\beta_+^j,\beta_-^j\}_{j=1}^{N_\beta}$ are the recursion parameters and $\{\hat{\bm{\phi}}^j\}_{j=0}^{N_\beta}$ are the auxiliary variables, while $\{\beta^j_*\}_{j=1}^{N_\beta}$ and $h$ satisfy
        \begin{equation}
            h\prod_{j=1}^{N_\beta}(\alpha-\beta^j_*)
            =\prod_{j=1}^{N_\beta}(\alpha-\beta^j_-)
            +c\prod_{j=1}^{N_\beta}(\alpha-\beta^j_+),
            \label{eq:owns-r-polynomial}
        \end{equation}
    for freely-chosen $c\geq0$ (see Remark~\ref{rmk:owns-r-c} for more information on choosing $c$). Note that $\{\beta^j_*\}_{j=1}^{N_\beta}$ and $h$ must be computed numerically (e.g., via \texttt{roots} in MATLAB).
\end{definition}

\begin{proposition}[Zhu and Towne~\cite{Zhu_2021_OWNS-R}]\label{prop:owns-r-implementation}
    The recursive filter~\eqref{eq:filter-r} can be recast in matrix form
    \begin{equation}
        P^{(R)}_{N_{\beta}}
        =V E_{N_\beta} V^{-1},\quad E_{N_\beta}=(1+cF)^{-1},
        \label{eq:matOWNSR}
    \end{equation}
    where
    \begin{equation}
        E_{N_\beta}^{(k)}
        =
        \frac{\prod_{j=1}^{N_\beta}(\alpha_k-\beta_-^j)}{\prod_{j=1}^{N_\beta}(\alpha_k-\beta_-^j) + c \prod_{j=1}^{N_\beta}(\alpha_k-\beta_+^j)},\quad k=1,\dots,N,
    \end{equation}
    for $F$ defined in~\eqref{def:matF}.
\end{proposition}
\begin{proof}
    See Zhu and Towne~\cite{Zhu_2021_OWNS-R}.
\end{proof}

\begin{remark}\label{rmk:owns-r-c}
    Zhu and Towne~\cite{Zhu_2021_OWNS-R} recommend setting $c\approx1$ since $c=0$ retains all modes, while $c\to\infty$ removes all modes. Therefore, we set $c=1$.
\end{remark}

\begin{proposition}\label{prop:projMatOWNS-R}
    $P_{N_\beta}^{(R)}$ is a projection matrix if and only if $E_{N_\beta}^{(k)}\in\{0,1\}$ for all $k=1,\dots,N$.
\end{proposition}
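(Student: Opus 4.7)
The plan is to reduce the idempotency condition $P^2 = P$, which characterizes projection matrices, to a scalar condition on each diagonal entry of $E_{N_\beta}$ using the diagonalization provided by Proposition~\ref{prop:owns-R-E}.

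First, I would invoke Proposition~\ref{prop:owns-R-E} to write $P_{N_\beta}^{(R)} = V E_{N_\beta} V^{-1}$, and observe that since $F$ is diagonal (by its definition in~\eqref{def:matF}), $E_{N_\beta} = (I + cF)^{-1}$ is diagonal as well. Squaring then gives
\begin{equation}
(P_{N_\beta}^{(R)})^2 = V E_{N_\beta} V^{-1} V E_{N_\beta} V^{-1} = V E_{N_\beta}^2 V^{-1},
\end{equation}
so by invertibility of $V$, the idempotency condition $(P_{N_\beta}^{(R)})^2 = P_{N_\beta}^{(R)}$ is equivalent to $E_{N_\beta}^2 = E_{N_\beta}$.

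Since $E_{N_\beta}$ is diagonal, this matrix equation decouples into the scalar equations $(E_{N_\beta}^{(k)})^2 = E_{N_\beta}^{(k)}$ for $k = 1, \ldots, N$, whose only solutions are $E_{N_\beta}^{(k)} \in \{0, 1\}$. This immediately yields both directions of the biconditional: if every diagonal entry is $0$ or $1$ then $E_{N_\beta}^2 = E_{N_\beta}$ and hence $P_{N_\beta}^{(R)}$ is idempotent, and conversely idempotency forces each diagonal entry to satisfy the scalar equation.

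There is essentially no obstacle here; the proof is a direct consequence of the diagonal structure of $E_{N_\beta}$ inherited from Proposition~\ref{prop:owns-R-E}. The only care needed is to note explicitly that $F$, and therefore $E_{N_\beta}$, is diagonal so that the matrix identity $E_{N_\beta}^2 = E_{N_\beta}$ reduces entrywise to a scalar idempotency condition.
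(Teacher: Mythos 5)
Your proposal is correct and follows essentially the same route as the paper's proof: diagonalize via Proposition~\ref{prop:owns-R-E}, reduce idempotency of $P_{N_\beta}^{(R)}$ to $E_{N_\beta}^2 = E_{N_\beta}$ using the invertibility of $V$, and then use the diagonal structure of $E_{N_\beta}$ to decouple into the scalar conditions $E_{N_\beta}^{(k)} \in \{0,1\}$. No gaps; nothing further is needed.
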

\begin{proof}
    We have $(P^{(R)}_{N_{\beta}})^2=VE^2_{N_\beta}V^{-1}$, so that $P_{N_\beta}^{(R)}$ is a projection matrix if and only if $E_{N_\beta,k}^2=E_{N_\beta,k}$ for $k=1,\dots,N$, which requires $E_{N_\beta,k}\in\{0,1\}$.
\end{proof}

\begin{proposition}\label{prop:commMatOWNS-R}
    $P_{N_\beta}^{(R)}$ commutes with $M$.
\end{proposition}
\begin{proof}
$P^{(R)}_{N_\beta}M=VE_{N_\beta}DV^{-1}=VDE_{N_\beta}V^{-1}=MP^{(R)}_{N_\beta}$ since $E_{N_\beta}$ and $D$ are diagonal matrices.
\end{proof}

\begin{proposition}[Zhu and Towne~\cite{Zhu_2021_OWNS-R}]\label{prop:OWNS-R-converge}
$P_{N_\beta}^{(R)}\to P$ if and only if $E_{N_\beta}^{(m)}\to 1$ for all $m\in i^{(+)}$ and $E_{N_\beta}^{(n)}\to0$ for all $n\in i^{(-)}$.
\end{proposition}
\begin{proof}
    See Zhu and Towne~\cite{Zhu_2021_OWNS-R}.
\end{proof}

\begin{proposition}\label{prop:minimalOWNS-R}
    Recursion parameters such that $P_{N_\beta}^{(R)}\to P$ exist if and only if $\alpha_m\neq\alpha_n$ for all $(m,n)\in i^{(+)}\times i^{(-)}$. In particular, take $N_\beta=\tilde{N}_++\tilde{N}_-$ with
    \begin{subequations}
    \begin{align}
        \beta_+^{j}&=\alpha_{\tilde{i}^{(+)}(j)},\quad
        \beta_-^{j}\neq\alpha_{\tilde{i}^{(+)}(j)},\quad
        j=1,\dots,\tilde{N}_+,\\
        \beta_-^{\tilde{N}_++j}&=\alpha_{\tilde{i}^{(-)}(j)},\quad
        \beta_+^{\tilde{N}_++j}\neq\alpha_{\tilde{i}^{(-)}(j)},\quad j=1,\dots,\tilde{N}_-.
    \end{align}
    \label{eq:OWNS-R-params}
    \end{subequations}
\end{proposition}
\begin{proof}
    If there exists $(\hat{m},\hat{n})\in i^{(+)}\times i^{(-)}$ such that $\alpha_{\hat{m}}=\alpha_{\hat{n}}$, then $E_{N_\beta}^{(\hat{m})}=E_{N_\beta}^{(\hat{n})}$, but convergence requires $E_{N_\beta}^{(\hat{m})}\to1$ and $E_{N_\beta}^{(\hat{n})}\to0$, which is not possible. If $\alpha_m\neq\alpha_n$ for $(m,n)\in i^{(+)}\times i^{(-)}$, then choose recursion parameters according to~\eqref{eq:OWNS-R-params} so that
    \begin{align*}
    E_{\tilde{N}_++\tilde{N}_-}^{(m)}
    &=\frac{\prod_{j=1}^{\tilde{N}_++\tilde{N}_-}(\alpha_m-\beta_-^j)}{\prod_{j=1}^{\tilde{N}_++\tilde{N}_-}(\alpha_m-\beta_-^j)+0}
    =1,\quad \forall m\in i^{(+)},\\
    E_{\tilde{N}_++\tilde{N}_-}^{(n)}
    &=\frac{0}{0+c\prod_{j=1}^{\tilde{N}_++\tilde{N}_-}(\alpha_n-\beta_+^j)}
    =0,\quad \forall n\in i^{(-)},
    \end{align*}
    where we have used that $\alpha_m \in \{\beta_+^j\}$, $\alpha_m \notin \{\beta_-^j\}$ for all $m \in i^{(+)}$ and $\alpha_n \in \{\beta_-^j\},$ $\alpha_n \notin \{\beta_+^j\}$ for all $n \in i^{(-)}$.    
\end{proof}

\begin{proposition}\label{prop:minimalOWNS-R-false}
    The recursion parameters that guarantee OWNS-P convergence (Proposition~\ref{prop:minimalOWNS-P}) do not guarantee OWNS-R convergence.
\end{proposition}
\begin{proof}
    If $\tilde{N}_+\leq \tilde{N}_-$, use~\eqref{eq:owns-p-params_plus}. Then there exists $n\in i^{(-)}$ such that $\alpha_n\notin\{\beta_-^j\}_{j=1}^{N_\beta}$ so that $E_{N_\beta}^{(n)}\neq0$ and $P_{N_\beta}^{(R)}\neq P$. If $\tilde{N}_-<\tilde{N}_+$, use~\eqref{eq:owns-p-params_minus}. Then there exists $m\in i^{(+)}$ such that $\alpha_m\notin\{\beta_+^j\}_{j=1}^{N_\beta}$ so that $E_{N_\beta}^{(m)}\neq1$ and $P_{N_\beta}^{(R)}\neq P$
\end{proof}

\begin{proposition}\label{prop:minimalOWNS-R-true}
    The recursion parameters that guarantee OWNS-R convergence (Proposition~\ref{prop:minimalOWNS-R}) also guarantee OWNS-P convergence.
\end{proposition}
\begin{proof}
Taking $N_\beta=N$ with parameters~\eqref{eq:OWNS-R-params} guarantees that $\alpha_m\in\{\beta_+^j\}_{j=1}^{N_\beta}$ for all $m\in i^{(+)}$ and $\alpha_n\in\{\beta_-^j\}_{j=1}^{N_\beta}$ for all $n\in i^{(-)}$, so that the convergence criterion~\eqref{eq:fConv} is satisfied (see Proposition \ref{prop:approxProjectConvergence}).
\end{proof}

\begin{proposition}[Zhu and Towne~\cite{Zhu_2021_OWNS-R}]\label{prop:errOWNSR}
    The error introduced by OWNS-R is bounded by
    \begin{equation}
        \|P-P_{N_\beta}^{(R)}\|\leq
        \max\big\{ |c|\|F_{++}\|, \|F_{--}^{-1}\| \big\}\|V\|\|V^{-1}\|
        +\mathcal{O}(\epsilon^2),
    \end{equation}
    where $\|F_{++}\|,\|F_{--}\|<\epsilon$ for small $\epsilon>0$ such that $\epsilon\ll1$.
\end{proposition}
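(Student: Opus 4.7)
The plan is to exploit the shared eigenvector structure of $P$ and $P_{N_\beta}^{(R)}$ established in proposition~\ref{prop:owns-R-E}, which reduces the operator-norm error to a diagonal problem. Writing
\[
P - P_{N_\beta}^{(R)} = V(E - E_{N_\beta}) V^{-1}
\]
and invoking submultiplicativity of the induced 2-norm gives $\|P - P_{N_\beta}^{(R)}\| \leq \|V\|\,\|E - E_{N_\beta}\|\,\|V^{-1}\|$. Because both $E$ and $E_{N_\beta} = (I + cF)^{-1}$ are diagonal, their difference is diagonal as well, so its 2-norm equals the maximum absolute value of its entries; the problem therefore reduces to a mode-by-mode scalar estimate.

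Next, I would evaluate these diagonal entries separately for downstream- and upstream-going modes. For $k = i^{(+)}$ one has $E_k = 1$, giving the entry $1 - (1 + cF_k)^{-1} = cF_k/(1 + cF_k)$; for $k = i^{(-)}$ one has $E_k = 0$, giving $-(1 + cF_k)^{-1}$. Under the smallness hypotheses (reading the proposition as $\|F_{++}\|<\epsilon$ and $\|F_{--}^{-1}\|<\epsilon$, which is what OWNS-R convergence requires via proposition~\ref{prop:OWNS-R-converge}), each scalar entry admits a convergent expansion. For downstream modes, $|cF_k| \leq |c|\epsilon \ll 1$, so a Neumann expansion gives $cF_k/(1+cF_k) = cF_k + \mathcal{O}(F_k^2)$. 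For upstream modes, I would rewrite $(1 + cF_k)^{-1} = F_k^{-1}/(c + F_k^{-1})$, which under $|F_k^{-1}| < \epsilon$ expands as $F_k^{-1}/c + \mathcal{O}(F_k^{-2})$. Taking absolute values, maximising over $k$, and specialising to $c=1$ (the choice adopted throughout the paper following the remark after proposition~\ref{prop:owns-R-E}) yields
\[
\|E - E_{N_\beta}\| \leq \max\{|c|\|F_{++}\|,\; \|F_{--}^{-1}\|\} + \mathcal{O}(\epsilon^2),
\]
and multiplication by $\|V\|\|V^{-1}\|$ produces the stated bound.

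The main obstacle is not the algebra itself but certifying that the remainders are genuinely $\mathcal{O}(\epsilon^2)$ uniformly across all modes. This requires lower bounds on $|1 + cF_k|$ that hold uniformly in $k$: for $k \in i^{(+)}$ we need $|cF_k|$ bounded away from $1$, which holds provided $|c|\epsilon < 1$; for $k \in i^{(-)}$ we need $|cF_k|$ bounded away from $0$, which follows because $|cF_k| \geq |c|/\epsilon \gg 1$ once $\epsilon \ll |c|$. These two conditions jointly delimit the small-$\epsilon$ regime in which the asymptotic bound is meaningful, mirroring the role played by $\hat\epsilon$ in proposition~\ref{prop:errOWNSP} for OWNS-P.
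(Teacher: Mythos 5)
Your proof follows essentially the same route as the paper's: the factorization $P-P_{N_\beta}^{(R)}=V(E-E_{N_\beta})V^{-1}$ from proposition~\ref{prop:owns-R-E}, reduction to the diagonal entries $cF_k/(1+cF_k)$ and $-F_k^{-1}/(F_k^{-1}+c)$, and first-order expansion under $\|F_{++}\|,\|F_{--}^{-1}\|<\epsilon$. Your added remarks on the uniform lower bounds for $|1+cF_k|$ and on the $1/|c|$ factor in the upstream term (absorbed by taking $c=1$) are correct refinements of the same argument, not a different approach.
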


\begin{proof}
    See Zhu and Towne~\cite{Zhu_2021_OWNS-R}.
\end{proof}

\begin{proposition}~\label{prop:greedy-r}
    If $\alpha_m\in\{\beta_+^j\}_{j=1}^{N_\beta}$ for any $m\in i^{(+)}$, then $P_{N_\beta}^{(R)}\bm{v}_m=\bm{v}_m$; if $\alpha_n\in\{\beta_-^j\}_{j=1}^{N_\beta}$ for any $n\in i^{(-)}$, then $P_{N_\beta}^{(R)}\bm{v}_n=0$.
\end{proposition}

\begin{proof}
    If $\alpha_m\in\{\beta_+^j\}_{j=1}^{N_\beta}$ for $m\in i^{(+)}$, then $E_{N_\beta}^{(m)}=1$ so that $P_{N_\beta}^{(R)}\bm{v}_{m}=\bm{v}_m$. If $\alpha_n\in\{\beta_-^j\}_{j=1}^{N_\beta}$ for $n\in i^{(-)}$, then $E_{N_\beta}^{(n)}=0$ so that $P_{N_\beta}^{(R)}\bm{v}_{n}=0$.
\end{proof}

\begin{remark}
Proposition~\ref{prop:errOWNSR} establishes that the error in introduced by OWNS-R scales with $\max\{ |c|\|F_{++}\|, \|F_{--}^{-1}\| \}$ for sufficiently small $\|F_{++}\|$ and $\|F_{--}\|$, while Proposition~\ref{prop:greedy-p} shows it is always possible to achieve zero error for any eigenvector by using its eigenvalue as a recursion parameter.
\label{rmk:error-compare}
\end{remark}

\begin{proposition}\label{prop:owns-r-repeat-blowup}
For any $k=1,\dots,N$, repeated application of $P_{N_\beta}^{(R)}$ causes unbounded growth of $\bm{v}_k$ when $|E_{N_\beta}^{(k)}|>1$ and decay to zero when $|E_{N_\beta}^{(k)}|<1$.
\end{proposition}
\begin{proof}
    Applying $P_{N_\beta}^{(R)}$ $n$ times yields $(P_{N_\beta}^{(R)})^n\bm{v}_k = V (E_{N_\beta})^n V^{-1}\bm{v}_k$. If $|E_{N_\beta}^{(k)}| < 1$, then mode $k$ will be removed ($\lim_{n\to\infty}|E_{N_\beta}^{(k)}|=0$), which is undesirable for right-going modes. In contrast, it will grow without bound if $|E_{N_\beta}^{(k)}| > 1$ ($\lim_{n\to\infty}|E_{N_\beta}^{(k)}|=\infty$), which is undesirable for both right- and left-going modes.
\end{proof}

\begin{remark}
Proposition~\ref{prop:projMatOWNS-R} shows that $P_{N_\beta}^{(R)}$ is not generally a projection matrix, while Proposition~\ref{prop:owns-r-repeat-blowup} shows that repeated application of $P_{N_\beta}^{(n)}$ introduces additional error unless $|E_{N_\beta}^{(n)}|<1$ for all $n\in i^{(-)}$, and $E_{N_\beta}^{(m)}=1$ for all $m\in i^{(+)}$, so that it should only be applied once. In contrast, OWNS-P does not have this limitation since $P_{N_\beta}$ is always a projection matrix by Proposition~\ref{prop:approxProjectProjectMat}.
\end{remark}

\begin{remark}~\label{rmk:rounding}
    In theory, the OWNS-R error decreases with increasing $N_\beta$ if both $\|F_{++}\|$ and $\|F_{--}^{-1}\|$ decrease. In practice, we observe that for sufficiently large $N_\beta$, rounding errors due to finite precision arithmetic prevent the OWNS-R filter from converging. This error can partially be mitigated by computing $\{\beta_*^j\}_{j=1}^{N_\beta}$ to quad, rather than double precision~\cite{Zhu_2021_OWNS-R}, but error associated with applying the filter persists. We compute $\{\beta_*^j\}_{j=1}^{N_\beta}$ to quad precision in Section 5, while we use double precision in Section 6 because $N_\beta$ is small enough that quad precision is unnecessary.
\end{remark}

\begin{remark}
    Despite its slower convergence and rounding errors for large $N_\beta$, OWNS-R is advantageous because it entails a lower computational cost than OWNS-P and scales better for large systems of equations~\cite{Zhu_2021_OWNS-R}. We refer to Zhu and Towne~\cite{Zhu_2021_OWNS-R} for a more detailed discussion of computational cost of OWNS-P and OWNS-R.
\end{remark}

\subsection{Summary and comparison of the OWNS formulations}

The OWNS-P error scales with $\|F_{++}\|\|F_{--}^{-1}\|$, so that we have convergence with either $\|F_{--}^{-1}\|=0$ or $\|F_{++}\|=0$, while the OWNS-R error scales with $\max\{|c|\|F_{++}\|, \|F_{--}^{-1}\| \}$ so that we must instead have $\|F_{--}^{-1}\|=\|F_{++}\|=0$ for convergence. Thus, OWNS-R will generally require larger $N_\beta$. Although similar strategies can be used to pick recursion parameters for both OWNS-P and OWNS-R, Proposition~\ref{prop:minimalOWNS-R-false} shows that there exists parameter sets for which OWNS-P is converged while OWNS-R is not.

Whereas OWNS-P yields a projection matrix that generally does not commute with $M$, OWNS-R yields a matrix that commutes with $M$ but is not generally a projection matrix, so that OWNS-P and OWNS-R each lose one of the properties of $P$ (projection matrix or commutation with $M$). Although $P_{N_\beta}$ is a projection matrix, we will have $P_{N_\beta}\hat{\bm{\phi}}_{N_\beta}'
\neq\hat{\bm{\phi}}_{N_\beta}'$, where $\hat{\bm{\phi}}'_{N_\beta}$ is the solution to the OWNS-P approximation to~\eqref{eq:lowns1}, since $P_{N_\beta}M=M P_{N_\beta}$ (unless the approximation is fully converged). Although $P_{N_\beta}^{(R)}$ commutes with $M$, we will have $P_{N_\beta}^{(R)}\hat{\bm{\phi}}_{N_\beta}^{(R)\prime}
\neq\hat{\bm{\phi}}_{N_\beta}^{(R)\prime}$, where $\hat{\bm{\phi}}^{(R)\prime}_{N_\beta}$ is the solution to the OWNS-R approximation to~\eqref{eq:lowns1}, since $P_{N_\beta}^{(R)}\neq P_{N_\beta}^{(R)}$ (unless the approximation is fully converged). Moreover, by Proposition~\ref{prop:owns-r-repeat-blowup}, repeated applications of $P_{N_\beta}^{(R)}$ should be avoided to prevent non-physical solution blow-up or decay.

\section{Greedy algorithm for recursion parameter selection}\label{sec:greedy}

If $\alpha_m \neq \alpha_n$ for all $(m,n) \in i^{(+)}\times i^{(-)}$, then
Propositions~\ref{prop:minimalOWNS-P} and~\ref{prop:minimalOWNS-R} identify
recursion parameter sets that guarantee convergence. However, these sets are
large: OWNS-P requires solving the linear systems~\eqref{eq:filter-p}, while the
OWNS-R filter~\eqref{eq:filter-r} becomes increasingly sensitive to rounding
errors as $N_\beta$ grows. In addition, previous work~\cite{Towne_2015_OWNS-O,Towne_2022_OWNS-P,Zhu_2021_OWNS-R} has avoided computing the eigenvalues of $M$ due to its relatively high computational cost. Consequently, previous work has relied on \textit{heuristic} estimates of key eigenvalues to construct stable and accurate OWNS approximations with $N_\beta \ll N$. This approach has two drawbacks: it requires problem-specific parameter tuning and typically yields slow convergence. In contrast to previous work, we choose to accept the computational cost of computing the eigenvalues of $M$, with the goal of using this information to choose parameters that converge faster, thus reducing the net cost of OWNS.

In the present work, we formulate an optimization problem that enables automatic
recursion parameter selection with rapid error convergence. Let
\begin{equation}
    \mathcal{S}_+ = \{ \alpha_+^{m} \mid m = 1,\dots,\tilde{N}_+\},\quad
    \mathcal{S}_- = \{ \alpha_-^{n} \mid n = 1,\dots,\tilde{N}_-\},
\end{equation}
denote the sets of right- and left-going eigenvalues. Propositions~\ref{prop:minimalOWNS-P} and~\ref{prop:minimalOWNS-R} guarantee the
existence of subsets $\{\beta_+^j\}_{j=1}^{k} \subset \mathcal{S}_+$ and
$\{\beta_-^j\}_{j=1}^{k} \subset \mathcal{S}_-$
such that the OWNS approximations converge, with
$k=\min\{\tilde{N}_-,\tilde{N}_+\}$ for OWNS-P and $k=\tilde{N}_-+\tilde{N}_+$ for OWNS-R. 
For $N_\beta \ll N$, we formulate a \textit{subset selection} problem: select
$\{\beta_+^j\}_{j=1}^{N_\beta} \subset \mathcal{S}_+$ and
$\{\beta_-^j\}_{j=1}^{N_\beta} \subset \mathcal{S}_-$ to minimize the error in the
OWNS approximations. Subset selection is NP-hard~\cite{Natarajan_1995_OptimalSubset} 
and cannot be solved in polynomial time. Following Das and Kempe (2011)~\cite{Das_2011_Greedy}, we
apply a greedy algorithm to solve the subset selection problem, using the error
bounds from Propositions~\ref{prop:errOWNSP} and~\ref{prop:errOWNSR} as the
objective functions.

\subsection{Greedy subset selection}

We define the recursion parameter subsets
$\Xi_{N_\beta}^{+} \equiv \{\beta_+^j\}_{j=1}^{N_\beta} \subset \mathcal{S}_+$ and
$\Xi_{N_\beta}^{-} \equiv \{\beta_-^j\}_{j=1}^{N_\beta} \subset \mathcal{S}_-$,
and the functions
\begin{subequations}
\begin{equation}
\hat{\mathcal{J}}_+(\alpha;\Xi_{N_\beta}^{+},\Xi_{N_\beta}^{-}) \equiv
\prod_{j=1}^{N_\beta} \frac{|\alpha - \beta_+^j|}{|\alpha - \beta_-^j|}, \quad
\hat{\mathcal{J}}_-(\alpha;\Xi_{N_\beta}^{+},\Xi_{N_\beta}^{-}) \equiv
\prod_{j=1}^{N_\beta} \frac{|\alpha - \beta_-^j|}{|\alpha - \beta_+^j|}.
\end{equation}
By Proposition~\ref{prop:errOWNSP}, the OWNS-P error scales as
\begin{equation}
\mathcal{J}(\Xi_{N_\beta}^{+},\Xi_{N_\beta}^{-})
\equiv
\max_{\alpha\in\mathcal{S}_+} \hat{\mathcal{J}}_+(\alpha;\Xi_{N_\beta}^{+},\Xi_{N_\beta}^{-})
\;\max_{\alpha\in\mathcal{S}_-}\hat{\mathcal{J}}_-(\alpha;\Xi_{N_\beta}^{+},\Xi_{N_\beta}^{-}),
\label{eq:objFncRecursions}
\end{equation}
and by Proposition~\ref{prop:errOWNSR}, the OWNS-R error scales as
\begin{equation}
\mathcal{J}^{(R)}(\Xi_{N_\beta}^{+},\Xi_{N_\beta}^{-}) \equiv
\max\Big\{
\max_{\alpha\in\mathcal{S}_+}\hat{\mathcal{J}}_+(\alpha;\Xi_{N_\beta}^{+},\Xi_{N_\beta}^{-}),
\;\max_{\alpha\in\mathcal{S}_-}\hat{\mathcal{J}}_-(\alpha;\Xi_{N_\beta}^{+},\Xi_{N_\beta}^{-})
\Big\}.
\label{eq:objFncRecursionsR}
\end{equation}
\end{subequations}
For brevity, we will use $\mathcal{J}$ to refer to both~\eqref{eq:objFncRecursions} and~\eqref{eq:objFncRecursionsR}.

Our subset selection problem becomes: for $N_\beta\ll N$, solve
\begin{equation}
    \min_{
    \Xi^+_{N_\beta}\subset\mathcal{S}_+,
    \Xi^-_{N_\beta}\subset\mathcal{S}_-
    }
    \mathcal{J}(\Xi_{N_\beta}^+,\Xi_{N_\beta}^-).
    \label{eq:subsetSelection}
\end{equation}
This problem is well posed because the objective function is continuous (it is a quotient of polynomials with a non-zero denominator), but it is NP-hard~\cite{Natarajan_1995_OptimalSubset}. We solve it using the greedy algorithm outlined in Algorithm~\ref{alg:greedyRecursion}. In words, we randomly initialize $\Xi_1^+$ and $\Xi_1^-$ with elements of $\mathcal{S}_+$ and $\mathcal{S}_-$, respectively. At each iteration we use to $\hat{\mathcal{J}}_+$ and $\hat{\mathcal{J}}_-$ to identify the worst-approximated  elements of $\mathcal{S}_+$ and $\mathcal{S}_-$, which we append to our recursion parameter set. This strategy leads to the largest local decrease in $\mathcal{J}$.

\begin{remark}\label{rmk:tracking}
    Previous work on OWNS has combined heuristic parameter selection with mode-tracking, where important discrete modes (e.g., Kelvin-Helmholtz in free-shear flows or Mack's second mode in high-speed boundary layers) were added to the recursion parameter set to improve approximation accuracy~\cite{Towne_2015_OWNS-O,Towne_2022_OWNS-P,Zhu_2021_OWNS-R}. While previous methods manually identified eigenvalues to track, the present work proposes an automatic approach.
\end{remark}
\begin{remark}
    The greedy algorithm can be initialized with user-specified recursion parameters, but we have found that it is better to initialize the algorithm with many different randomly chosen pairs of left- and right-going eigenvalues, and to choose the parameter set with the smallest $\mathcal{J}$.
\end{remark}

\begin{algorithm}
\caption{Greedy recursion parameter selection}\label{alg:greedyRecursion}
\begin{algorithmic}
\STATE Choose $\beta_+^1 \in \mathcal{S}_+$ and $\beta_-^1 \in \mathcal{S}_-$ randomly
\STATE $\Xi_1^+ \gets \{\beta_+^1\}, \quad \Xi_1^- \gets \{\beta_-^1\}$
\STATE $N_g \gets 1$
\WHILE{$N_g < N_\beta$}
\STATE $\beta_+^{N_g+1} \gets \arg\max_{\alpha\in\mathcal{S}_+}
\hat{\mathcal{J}}_+(\alpha;\Xi_{N_g}^+,\Xi_{N_g}^-)$
\STATE $\beta_-^{N_g+1} \gets \arg\max_{\alpha\in\mathcal{S}_-}
\hat{\mathcal{J}}_-(\alpha;\Xi_{N_g}^+,\Xi_{N_g}^-)$
\STATE $\Xi_{N_g+1}^+ \gets \Xi_{N_g}^+\cup \{\beta_+^{N_g+1}\}$
\STATE $\Xi_{N_g+1}^- \gets \Xi_{N_g}^-\cup \{\beta_-^{N_g+1}\}$
\STATE $N_g \gets N_g + 1$
\ENDWHILE
\end{algorithmic}
\end{algorithm}

\subsection{Numerical stability of the recursive filter}

OWNS solves equations of the form $(M-i\beta_-^j)\hat{\bm{\phi}}^{j+1}=(M-i\beta_+^j)\hat{\bm{\phi}}^{j}$ for $j=0,\dots,N_\beta-1$. If the exact solution is given by $\hat{\bm{\varphi}}^{j+1}$, then the rounding error introduced by solving the linear system is bounded by
\[
    \frac{\|\hat{\bm{\phi}}^{j+1}-\hat{\bm{\varphi}}^{j+1}\|}{\|\hat{\bm{\varphi}}^{j+1}\|}
    \leq
    \kappa(M-i\beta_-^jI)\frac{\|M-i\beta_-^jI\|\|\hat{\bm{\phi}}^{j+1}-\hat{\bm{\phi}}^{j}\|+|\beta_+^j-\beta_-^j|\|\hat{\bm{\phi}}^j\|}{\|(M-i\beta_+I)\hat{\bm{\varphi}}^j\|}.
\]
If $\beta_+^j=\beta_-^j$, then $\|\hat{\bm{\phi}}^{j+1}-\hat{\bm{\phi}}^{j}\|=\mathcal{O}(\epsilon_{\mathrm{mach}})$, where $\epsilon_{\mathrm{mach}}$ is machine epsilon, and the error is $\mathcal{O}(\epsilon_{\mathrm{mach}}\kappa(M-i\beta_-^jI))$. However, this error grows as $|\beta_+^j-\beta_-^j|$ grows, and accumulates for $j=0,\dots,N_\beta-1$. Therefore, it is preferable for $|\beta_-^j-\beta_+^j|$ to be small, so we re-order the recursion parameters such that $|\beta_+^{j-1}|<|\beta_+^{j}|$ and $|\beta_-^{j-1}|<|\beta_-^{j}|$ for $j=1,\dots,N_\beta-1$. The error bounds do not depend on the order of the recursion parameters, but is critical to sort them by magnitude when implementing OWNS on a computer. To further reduce the impact of rounding errors, we discard eigenvalues that are large in magnitude (e.g., greater in magnitude than $\mathcal{O}(100)$).

\subsection{Greedy algorithm and spatial marching}

For systems where $M$ varies slowly in $x$, the recursion parameters computed at the inlet can be reused at subsequent stations. However, they generally need to be updated to keep the filter error low, which can be done by \textit{tracking} them as they evolve downstream (Algorithm~\ref{alg:greedyTracking}). When the number of left- and right-going characteristics changes, such as in supersonic boundary-layer flows when the number of subsonic and supersonic grid points changes, it can be advantageous (although not necessary) to update the parameters using Algorithm~\ref{alg:greedyRecursion}.

Let $N_+^{(i)}$ and $N_-^{(i)}$ denote the number of left- and right-going modes, and 
let $\Xi_{N_\beta}^{+,(i)}$ and $\Xi_{N_\beta}^{-,(i)}$ denote the greedy recursion 
parameter sets at station $x^{(i)}$, for $i = 1,\dots,N_x$. The recursion parameters 
from station $i-1$ can be used as inputs to a sparse eigenvalue solver, such as MATLAB's 
\texttt{eigs} function, to compute the eigenvalues of $M^{(i)}$ closest to 
$\Xi_{N_\beta}^{+,(i)}$ and $\Xi_{N_\beta}^{-,(i)}$. For large, sparse $M$, this 
approach is far less expensive than computing the full eigenspectrum. 
Algorithm~\ref{alg:greedyTracking} outlines the procedure: if the number of left- or 
right-going modes changes, we recompute the recursion parameters from scratch using 
Algorithm~\ref{alg:greedyRecursion}; otherwise, we update the parameters using a 
sparse eigenvalue solver.

\begin{algorithm}
\caption{Recursion parameter tracking}\label{alg:greedyTracking}
\begin{algorithmic}
\STATE Compute $N_+^{(1)}$ and $N_-^{(1)}$ using Briggs' criterion (Definition~\ref{def:Briggs})
\STATE Compute $\Xi_{N_\beta}^{+,(1)}$ and $\Xi_{N_\beta}^{-,(1)}$ using Algorithm~\ref{alg:greedyRecursion}
\FOR{$i = 2,\dots,N_x$}
    \STATE Compute $N_+^{(i)}$ and $N_-^{(i)}$ using Briggs' criterion (Definition~\ref{def:Briggs})
    \STATE Initialize $\Xi_0^{+,(i)} \gets \{\},\quad \Xi_0^{-,(i)} \gets \{\}$
    \IF{$N_+^{(i)} = N_+^{(i-1)}$ \textbf{and} $N_-^{(i)} = N_-^{(i-1)}$}
        \FOR{$j = 1,\dots,N_\beta$}
            \STATE Compute $\beta_+^j$ by initializing \texttt{eigs} with $(\Xi_{N_\beta}^{+,(i-1)})_j$
            \STATE Compute $\beta_-^j$ by initializing \texttt{eigs} with $(\Xi_{N_\beta}^{-,(i-1)})_j$
            \STATE $\Xi_j^{+,(i)} \gets \{\beta_+^j\} \cup \Xi_{j-1}^{+,(i)}$
            \STATE $\Xi_j^{-,(i)} \gets \{\beta_-^j\} \cup \Xi_{j-1}^{-,(i)}$
        \ENDFOR
    \ELSE
        \STATE Compute $\Xi_{N_\beta}^{+,(i)}$ and $\Xi_{N_\beta}^{-,(i)}$ using Algorithm~\ref{alg:greedyRecursion}
    \ENDIF
\ENDFOR
\end{algorithmic}
\end{algorithm}

\subsection{Cost trade-off}

Despite the added cost that the greedy algorithm incurs when computing the eigenvalues of $M$, it can still lead to a net decrease in computational cost, relative to heuristic selection, since it decreases the number of recursion parameters required to achieve a desired error tolerance. Moreover, the full eigenspectrum is required only at a small number of stations, potentially just the domain inlet, and the selected eigenvalues can be efficiently updated by \textit{tracking} using a sparse eigenvalues solver. We also note that in optimization tasks, such as the resolvent analysis by Towne et al.~\cite{Towne_2022_OWNS-P}, the initial cost of computing recursion parameters can be amortized over multiple iterations of the optimization routine.

\begin{remark}
    As formulated here, the subset selection problem~\eqref{eq:subsetSelection} requires computing the full eigenspectrum of $M$. While feasible for flows with a single inhomogeneous direction, this becomes impractical for flows with multiple inhomogeneous directions. Future work should therefore focus on reformulating~\eqref{eq:subsetSelection} to require only a partial eigenspectrum.
\end{remark}

\section{Extension to the Navier-Stokes equations}\label{sec:NSE}

While the Navier-Stokes equations are not hyperbolic, if we neglect the viscous terms, we obtain the Euler equations, which are hyperbolic. To apply the one-way marching routine to the Navier-Stokes equations, the streamwise viscous terms are either neglected or treated as a forcing function~\cite{Towne_2022_OWNS-P,Kamal_2020_HOWNS,Sleeman_2025_NOWNS_AIAAJ}; we choose to treat them as a forcing function.

\subsection{Governing equations}

The non-dimensional compressible Navier-Stokes equations for an ideal gas can be written as
\begin{subequations}
\begin{align}
    \frac{\partial\rho}{\partial t}+\nabla\cdot(\rho\bm{u})
    &=0,\\
    \rho\frac{D\bm{u}}{D t}+\nabla p 
    &= \frac{1}{Re}\nabla\cdot\tau,\\
    \rho c_p\frac{D T}{D t}-\frac{Dp}{Dt}
    &=\frac{1}{Pr Re}\nabla\cdot\big(k(T)\nabla T\big)+\frac{1}{Re}\tau :\nabla\bm{u}.
\end{align}
for the stress tensor
\begin{equation}
    \tau = \mu(T) \big(\nabla\bm{u}+(\nabla\bm{u})^T\big)-\big(\frac{2}{3}\mu(T)-\kappa(T)\big)(\nabla\cdot\bm{u})I,
\end{equation}
\label{eq:NSE}
\end{subequations}
where $\rho$ is the density, $\bm{u}$ is the velocity, $p$ is the pressure, $T$ is the temperature, $c_p$ is the heat capacity at constant pressure, $\mu$ is the dynamic viscosity, and $\kappa$ is the bulk viscosity. We take the bulk viscosity to be zero ($\kappa = 0$) and introduce the Reynolds number, $Re = \rho_\infty^*a_{\infty}^*\delta_0^* / \mu_\infty^*$, Prandtl number, $Pr = c_{p,\infty}^*\mu_\infty^* / \kappa_\infty^*$, and Blasius length scale, $\delta_0^*=\sqrt{\mu_\infty^* x_0^* / \rho_\infty^*u_\infty^*}$.
Here, $(\cdot)_\infty$ and $(\cdot)^*$ denote freestream and dimensional quantities, respectively, while $\mu$ is the dynamic viscosity, $a$ is the speed of sound, $u$ is the streamwise velocity, and $x_0$ denotes the inlet boundary. We assume that the dynamic viscosity is a function of temperature according to Sutherland's law, while the Prandtl number and specific heat capacity are constant. We additionally introduce the dimensionless frequency $F = \omega^*\mu_\infty^* /\rho_\infty^*u_\infty^{*2}$, the dimensionless wavenumber $b = \beta^*\mu_\infty^* / \rho_\infty^*u_\infty^*$ and the Reynolds number (based on the streamwise coordinate) $Re_x = u_\infty^*x^*/\rho_\infty^*\mu_\infty^*$, where $\omega$ is the temporal frequency and $\beta$ is spanwise wavenumber.

\subsection{Linear OWNS}

Linear stability calculations evolve infinitesimal disturbances, $\bm{q}'$, to a steady equilibrium solution, $\bar{\bm{q}}$, where $\bm{q}=(\rho,\bm{u},T)$ denotes the vector of primitive variables. Neglecting streamwise viscous terms, the linear OWNS equations can be written
\begin{equation}
    A(\bar{\bm{q}})\frac{\partial\bm{q}'}{\partial x} = L(\bar{\bm{q}})\bm{q}'+\bm{f},
\end{equation}
with $\bm{q}'(x,y,z,t)=\hat{\bm{q}}(x,y)e^{i(\beta z - \omega t)}$ and $\bm{f}(x,y,z,t)=\hat{\bm{f}}(x,y)e^{i(\beta z - \omega t)}$ where we have linearized about the steady equilibrium solution and discretized in the wall-normal direction using a 4th-order central finite difference scheme.

\subsection{Nonlinear OWNS}

If the disturbances have finite amplitude, then disturbances of different frequency and spanwise wavenumber will interact nonlinearly with each other, so that we must perform a nonlinear calculation. The nonlinear OWNS equations can be written
\begin{subequations}
\begin{equation}
    A(\bar{\bm{q}})\frac{\partial\hat{\bm{q}}_{mn}}{\partial x}
    = \hat{L}_{mn}(\bar{\bm{q}})\hat{\bm{q}}_{mn}+\hat{\bm{F}}_{mn}(\bm{q}')+\hat{\bm{f}}_{mn},
\end{equation}
for $m=-M,\dots,M$ and $n=-N,\dots,N$, where
\begin{align}
    \bm{q}'(x,y,z,t)
    &=\sum_{m,=-M}^{M}\sum_{m,=-N}^{N}\hat{\bm{q}}_{mn}(x,y)e^{i(n\beta z-m\omega t)},\\
    L(\bar{\bm{q}})\bm{q}'
    &=\sum_{m=-M}^{M}\sum_{n=-N}^{N}\hat{L}_{mn}\hat{\bm{q}}_{mn}e^{i(n\beta z-m\omega t)},\\
    F(\bm{q}')
    &=\sum_{m=-M}^{M}\sum_{n=-N}^{N}\hat{F}_{mn}(\bm{q}')e^{i(n\beta z-m\omega t)}.
\end{align}
\end{subequations}
Note that we truncate the Fourier series at $M$ temporal and $N$ spanwise Fourier modes, while details on NOWNS are presented by Sleeman et al.~\cite{Sleeman_2025_NOWNS_AIAAJ}.

\section{Demonstration of the greedy algorithm for a single station}\label{sec:greedySingle}

We demonstrate the greedy algorithm for the oblique breakdown case studied by Joslin et al.~\cite{Joslin_1993_DNS} for a low-speed isothermal flat plate boundary-layer flow with disturbance frequency and wavenumber are $F = 86 \times 10^{-6}$ and $b = 0.222 \times 10^{-3}$, respectively, for the wall-normal domain $y\in[0,60]$ with $N_y=100$ at $Re_x=2.74\times10^5$. We find that greedy parameter selection yields faster error convergence for both OWNS-P and OWNS-R. We also find that OWNS-R with heuristic parameter selection does not properly remove all left-going modes, which is related to the rounding errors due to finite prediction arithmetic (see Remark~\ref{rmk:rounding}).

\subsection{Error convergence}

The projection error scales with the objective function $\mathcal{J}$, while we can measure it by considering a solution $\hat{\bm{\phi}}=V_{+}\hat{\bm{\psi}}_++V_{-}\hat{\bm{\psi}}_-$, where $P\hat{\bm{\phi}}=V_{+}\hat{\bm{\psi}}_+$ by definition. We choose random coefficients $\hat{\bm{\psi}}$, or coefficients associated with the TS wave, and compute the relative errors
\begin{subequations}
\begin{align}
    \mathrm{error}(\hat{\bm{\phi}})&
    =\frac{\|\hat{\bm{\phi}}'-P_{N_\beta}\hat{\bm{\phi}}\|}{\|\hat{\bm{\phi}}'\|}
    =\frac{\|V_+\bm{\psi}_+-P_{N_\beta}[V_+\bm{\psi}_++V_-\bm{\psi}_-]\|}{\|V_+\bm{\psi}_+\|},\label{eq:errPhi}\\
    \mathrm{error}(\hat{\bm{\phi}}_{\mathrm{TS}})&=
    \frac{\|\hat{\bm{\phi}}_{\mathrm{TS}}-P_{N_\beta}\hat{\bm{\phi}}_{\mathrm{TS}}\|}{\|\hat{\bm{\phi}}_{\mathrm{TS}}\|}
    =
    \frac{\|V_{\mathrm{TS}}\bm{\psi}_{\mathrm{TS}}-P_{N_\beta}[V_{\mathrm{TS}}\bm{\psi}_{\mathrm{TS}}]\|}{\|V_{\mathrm{TS}}\bm{\psi}_{\mathrm{TS}}\|}
    \label{eq:errPhiTS}.
\end{align}
\end{subequations}
Figures~\ref{fig:oblique-err-p} and~\ref{fig:oblique-err-r} plot $\mathcal{J}$ against the error~\eqref{eq:errPhi} for greedy and heuristic parameter selection for OWNS-P and OWNS-R, respectively, showing that the OWNS-P error scales with $\mathcal{J}$, while the OWNS-R error scales with $\mathcal{J}^{(R)}$ for small $N_\beta$. Figures~\ref{fig:oblique-err-r} and~\ref{fig:oblique-err-TS-r} show that the OWNS-R error, when using greedy selection increases when $N_\beta$ grows large, as discussed in Remark~\ref{rmk:rounding}. In contrast, the error when using heuristic selection for OWNS-R is non-increasing with increasing $N_\beta$, suggesting that heuristic selection is less prone to rounding errors than greedy selection.

Figure~\ref{fig:oblique-err-p} shows that greedy OWNS-P achieves machine zero error at $N_\beta=24$, while heuristic OWNS-P converges far more slowly and does not achieve machine zero error, even for much larger $N_\beta$. Greedy OWNS-R approaches machine zero error for $N_\beta\approx 40$, after which point the error increases due to rounding errors. We further see in Figure~\ref{fig:oblique-err-r} that OWNS-R with heuristic selection leads to large errors, even with large $N_\beta$.

Figure~\ref{fig:oblique-err-TS-p} shows that greedy selection yields significantly less error in the TS wave than heuristic selection when using OWNS-P. In contrast, for OWNS-R, Figure~\ref{fig:oblique-err-TS-r} shows that greedy selection yields less error for small $N_\beta$, while heuristic selection yields less error for large $N_\beta$, consistent with our observations in Figure~\ref{fig:oblique-err-r}. We further note that by Propositions~\ref{prop:greedy-p}, the error in the TS waves goes to zero if it is used as a recursion parameter, and the sudden decrease in error occurs when the greedy algorithm selects the TS wave as a recursion parameter.

Figure~\ref{fig:greedy-err-converge-quad} compares the error obtained using quad and double precision to compute $\{\beta_*^j\}_{j=1}^{N_\beta}$ for OWNS-R with heuristic parameter selection. Double precision yields rapidly increasing error as $N_\beta$ grows. Results for greedy selection are omitted, as quad precision provides negligible improvement in that case, likely due to the larger magnitude of the greedy parameters. For $N_\beta < 45$, double precision does not significantly increase the OWNS-R error, indicating that it is adequate in this regime. Therefore, we use double precision in the calculations presented in Section~\ref{sec:greedyMarching}.

Figure~\ref{fig:param-spectrum-heuristic} shows the heuristic parameters for $N_\beta=20$. We see that although heuristic selection attempts to place its parameters near the TS wave and along the continuous branches, it does so less efficiently than the greedy algorithm. Although it accurately targets the left-going acoustic waves along the imaginary axis for $\mathcal{I}(\alpha)<0$, it distributes the parameters for the right-going acoustic waves (near the imaginary axis for $\mathcal{I}(\alpha)>0$) and the vortical waves (along the axis at $\pi/4$ radians for $\mathcal{I}(\alpha)>0$) less accurately. Moreover, it places parameters along the imaginary axis in the left half-plane, where there are no eigenvalues. Thus, the greedy parameters suggest avenues for improving the heuristic selection procedure.

\begin{remark}
    In Section~\ref{subsec:OWNSR}, our numerical analysis showed that OWNS-P will converge faster than OWNS-R, which is what we observe empirically in Figure~\ref{fig:greedy-err-converge}.
\end{remark}

\begin{figure}
    \centering
    \begin{subfigure}[b]{0.48\textwidth}
        \centering
        \includegraphics[width=1\linewidth]{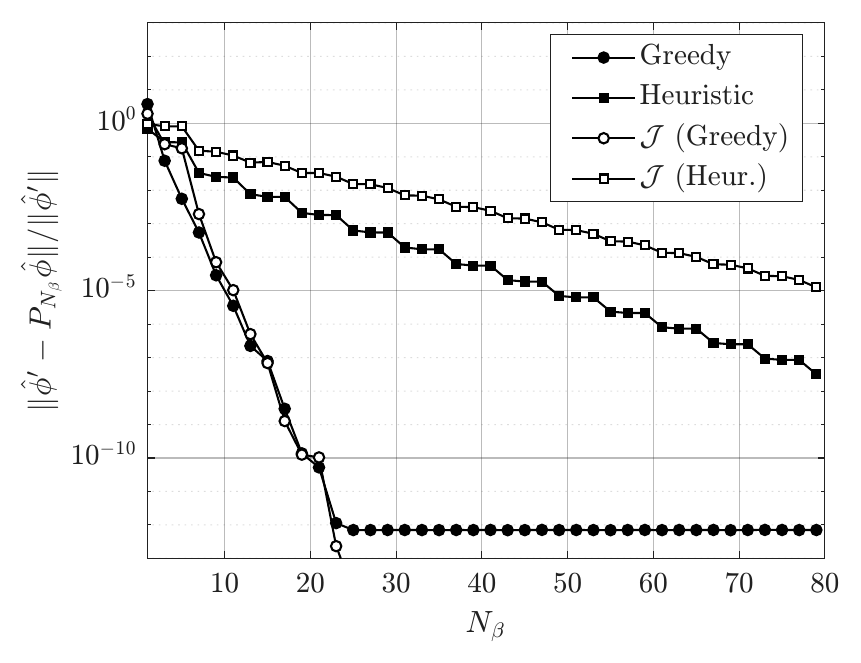}
        \caption{OWNS-P, random modes}\label{fig:oblique-err-p}
    \end{subfigure}
    \begin{subfigure}[b]{0.48\textwidth}
        \centering
        \includegraphics[width=1\linewidth]{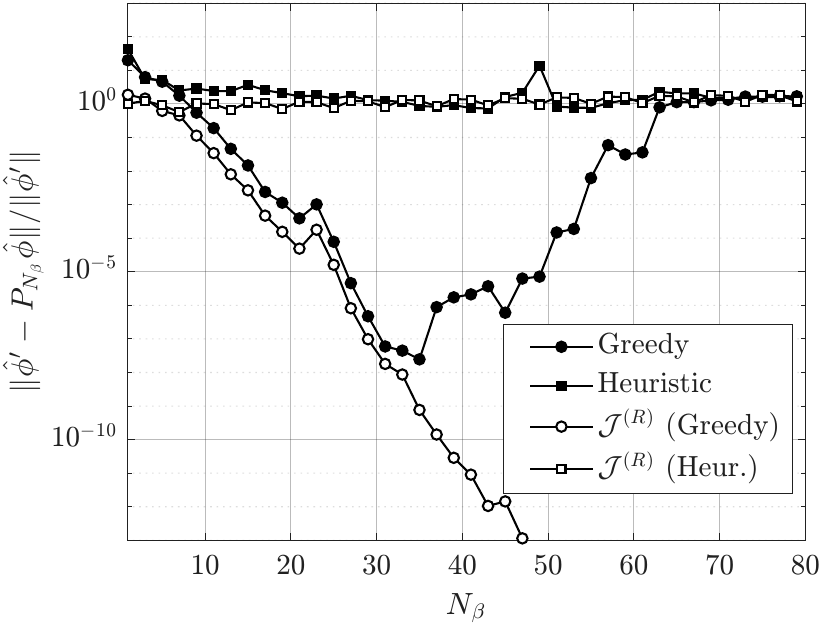}
        \caption{OWNS-R, random modes}\label{fig:oblique-err-r}
    \end{subfigure}\\
    \begin{subfigure}[b]{0.48\textwidth}
        \centering
        \includegraphics[width=1\linewidth]{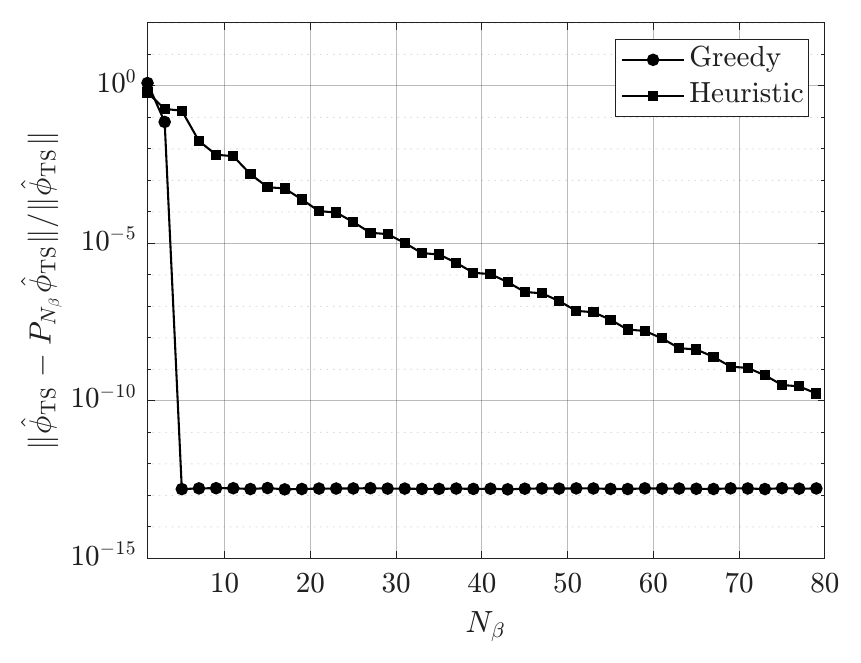}
        \caption{OWNS-P, TS wave only}\label{fig:oblique-err-TS-p}
    \end{subfigure}
    \begin{subfigure}[b]{0.48\textwidth}
        \centering
        \includegraphics[width=1\linewidth]{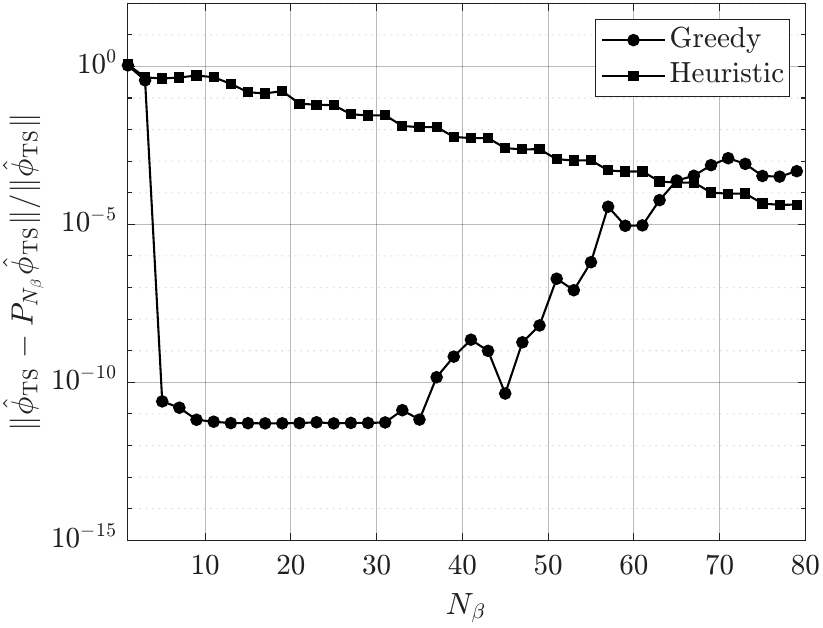}
        \caption{OWNS-R, TS wave only}\label{fig:oblique-err-TS-r}
    \end{subfigure}\\
    \caption{Error convergence for low-speed oblique breakdown, comparing OWNS-P and OWNS-R, using quad precision to compute $\{\beta_*^j\}_{j=1}^{N_\beta}*$ for OWNS-R.}
    \label{fig:greedy-err-converge}
\end{figure}

\begin{figure}
    \centering
    \begin{subfigure}[b]{0.48\textwidth}
        \centering
        \includegraphics[width=1\linewidth]{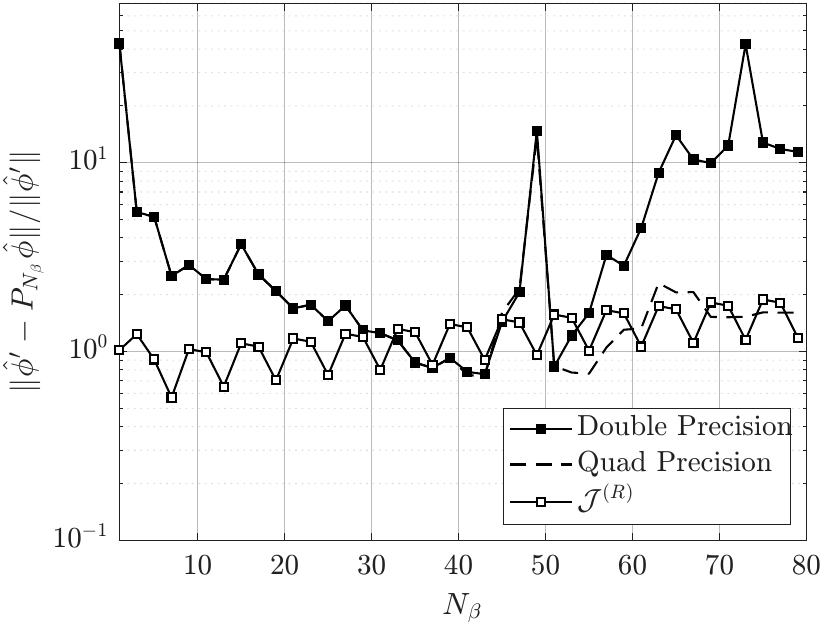}
        \caption{Random modes}\label{fig:oblique-err-r-quad}
    \end{subfigure}\hfill
    \begin{subfigure}[b]{0.48\textwidth}
        \centering
        \includegraphics[width=1\linewidth]{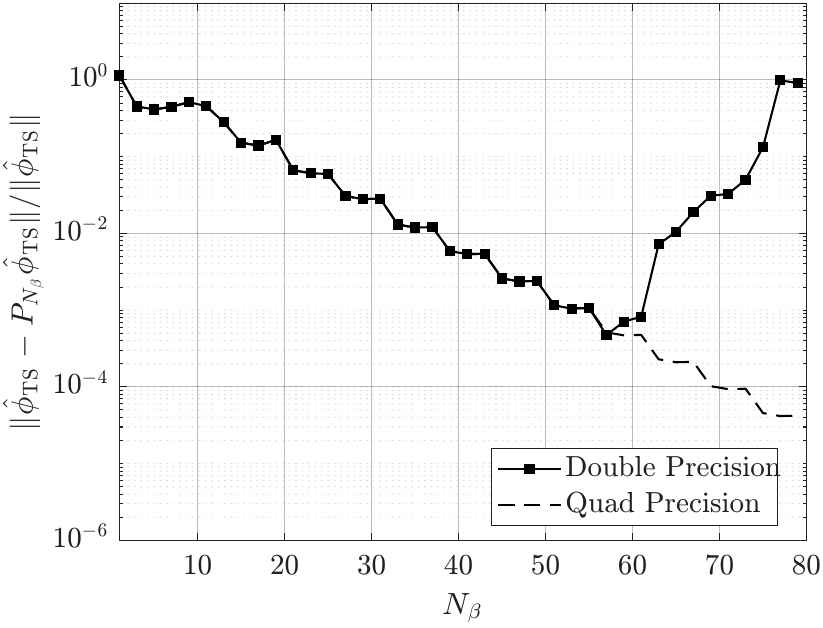}
        \caption{TS wave only}\label{fig:oblique-err-r-TS-quad}
    \end{subfigure}
    \caption{Error convergence for low-speed oblique breakdown for OWNS-R, comparing using double and quad precision to compute $\{\beta_*^j\}_{j=1}^{N_\beta}*$ for heuristic recursion parameter selection only.}
    \label{fig:greedy-err-converge-quad}
\end{figure}

\begin{figure}
    \centering
    \begin{subfigure}[b]{0.48\textwidth}
        \centering
        \includegraphics[width=1\linewidth]{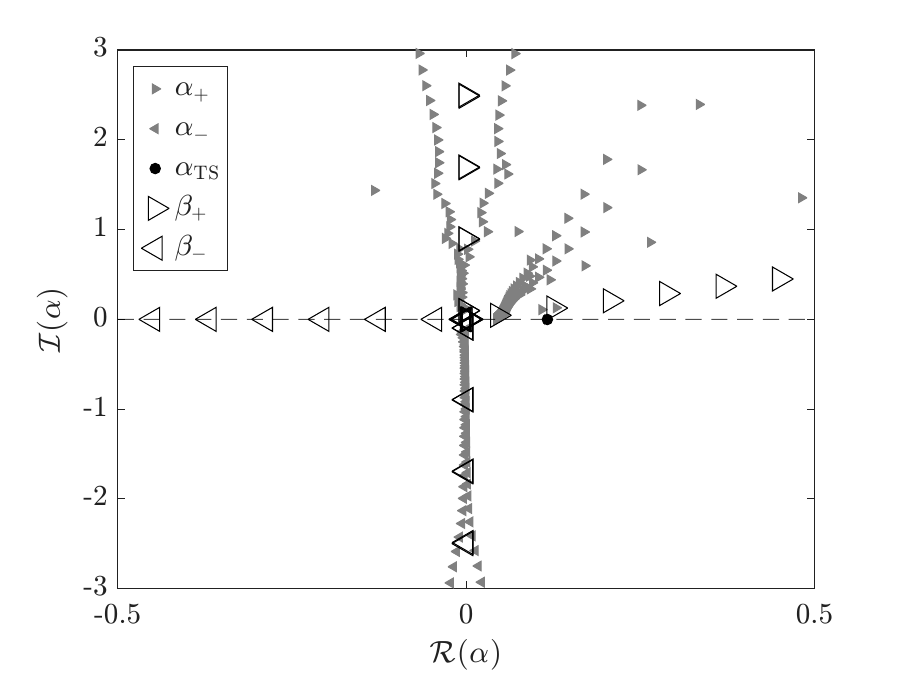}
        \caption{Heuristic}
        \label{fig:param-spectrum-heuristic}
    \end{subfigure}
    \begin{subfigure}[b]{0.48\textwidth}
        \centering
        \includegraphics[width=1\linewidth]{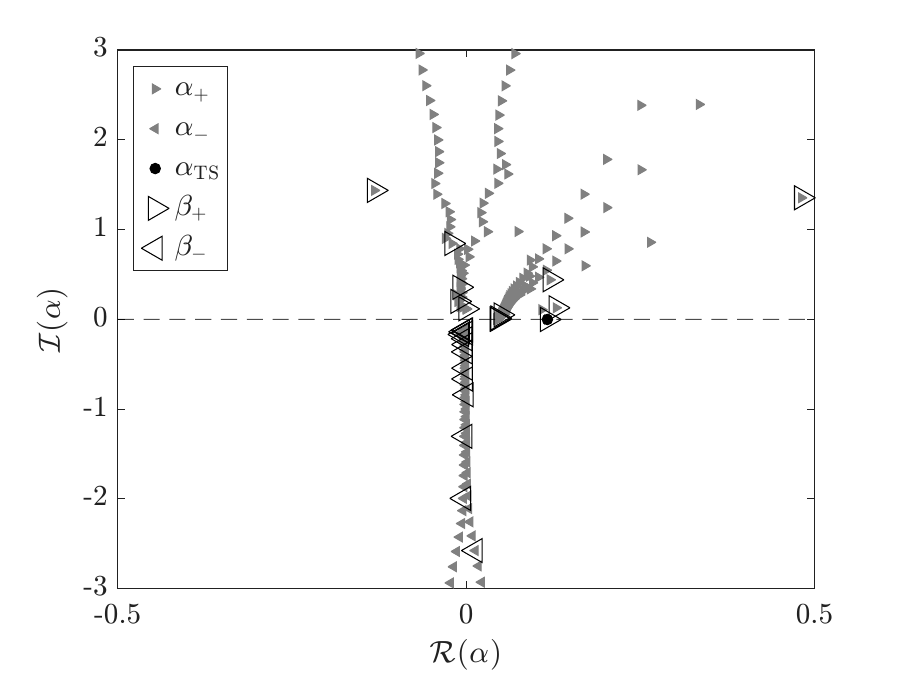}
        \caption{Greedy}
        \label{fig:param-spectrum-greedy}
    \end{subfigure}
    \caption{Recursion parameters plotted against spectrum for heuristic and greedy recursion parameter selection with $N_\beta = 20$.}
    \label{fig:param-spectrum}
\end{figure}

\subsection{Stability of the march}\label{subsec:stability}

All left-going modes must be removed for the march to be stable. Figure~\ref{fig:Oblique_OWNS_P_Briggs} shows that heuristic OWNS-P removes all left-going waves from $M$, while Figure~\ref{fig:Oblique_OWNS_R_Briggs} demonstrates that heuristic OWNS-R does not. For $\eta=0$, we observe that the OWNS-R spectrum, $P_{N_\beta}^{(R)}M$, has many eigenvalues with $\mathcal{I}(\alpha)<0$, and taking $\eta=1000$ allows us to verify that these are left-going modes according to Briggs' criterion. This demonstrates that there exists parameter sets for which OWNS-P is stable but OWNS-R is unstable. Repeating this analysis with greedy selection yields stable marches for both OWNS-P and OWNS-R.

\begin{remark}
Figures 3 and 6 from Zhu and Towne~\cite{Zhu_2021_OWNS-R} show that the OWNS-R operator removes all left-going modes for the dipole and jet test cases, while figure 10 suggests it still supports left-going modes for their supersonic boundary-layer case. Thus, our observations in Figure~\ref{fig:Oblique_OWNS_R_Briggs} are consistent with those of Zhu and Towne~\cite{Zhu_2021_OWNS-R} for their supersonic boundary-layer flow case.
\end{remark}

\begin{figure}
    \centering
    \begin{subfigure}[b]{0.48\textwidth}
        \centering
        \includegraphics[width=1\linewidth]{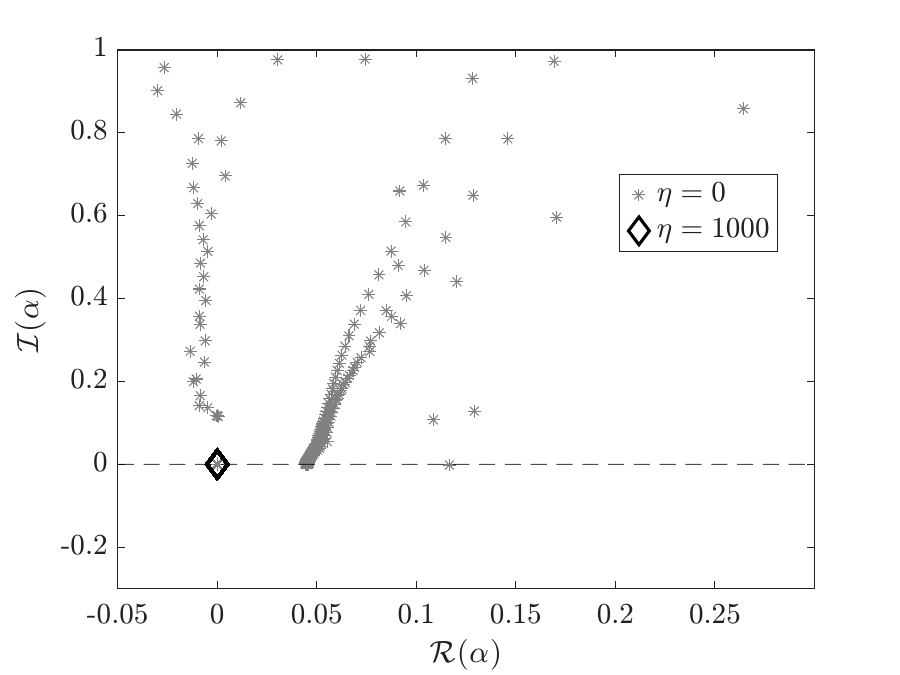}
        \caption{Heuristic OWNS-P, $N_\beta=15$}\label{fig:Oblique_OWNS_P_Briggs}
    \end{subfigure}
    \begin{subfigure}[b]{0.48\textwidth}
        \centering
        \includegraphics[width=1\linewidth]{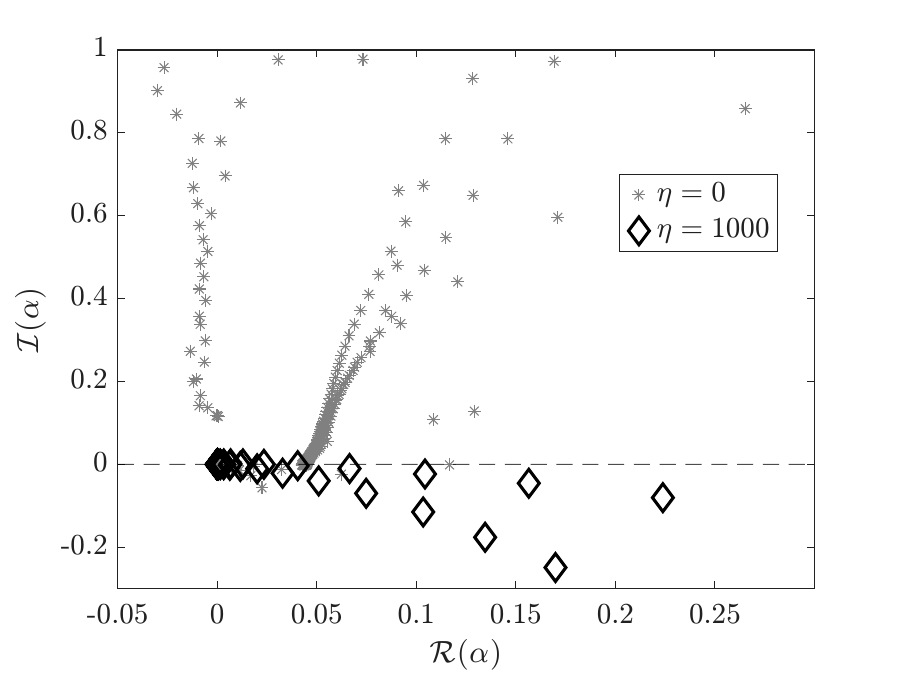}
        \caption{Heuristic OWNS-R, $N_\beta=52$}\label{fig:Oblique_OWNS_R_Briggs}
    \end{subfigure}\\
    \caption{For the march to be stable, the OWNS spectrum ($P_{N_\beta}M$) must not support any left-going modes. According to Briggs' criterion, a mode is left-going if $\mathcal{I}(\alpha)<0$ as $\eta\to\infty$. Figure~\ref{fig:Oblique_OWNS_P_Briggs} shows that heuristic OWNS-P does not have $\mathcal{I}(\alpha)<0$ for $\eta=1000$, while~\ref{fig:Oblique_OWNS_R_Briggs} shows that heuristic OWNS-R does, so that the heuristic OWNS-R march is unstable.}
    \label{fig:greedy-stability}
\end{figure}

\section{Demonstration of the greedy algorithm for spatial marching}\label{sec:greedyMarching}

Here we demonstrate that although the greedy algorithm must compute the eigenvalues of $M$, it leads to a net decrease in computational cost for linear and nonlinear disturbance evolution in low- and high-speed boundary-layer flows. In addition, we show that greedy selection alleviates numerical errors introduced by heuristic parameter selection for nonlinear disturbance evolution in a high-speed boundary-layer flow.

\subsection{3D low-speed oblique breakdown}

We first demonstrate for the  oblique breakdown case discussed in Section~\ref{sec:greedySingle} that greedy parameter selection reduces computational cost in both linear and nonlinear calculations. The problem parameters remain the same as before, and we consider the streamwise domain $Re_x \in [2.74, 6.08] \times 10^5$ with $N_x=2000$ stations. We compare the number of recursion parameters required for convergence of the relative error in the $N$-factor, which is a measure of the disturbance growth used in the $e^N$ method to empirically predict laminar-turbulent transition in boundary-layer flows~\cite{VanIngen_1956_eN,Smith_1956_eN}, and measure speed-up based on total wall-clock time. Note that the $N$-factor is defined as $N = \max_{x} \ln [ A(x) / A(x_0) ]$, where $A(x) = \max_{y} u'(x,y)$ is the disturbance amplitude and $x_0$ is domain inlet.

\subsubsection{Linear calculation}

OWNS-R with heuristic parameter selection does not fully remove left-going waves, so the march is unstable (see Figure~\ref{fig:greedy-stability}). Therefore, we compare OWNS-P and OWNS-R using greedy selection against OWNS-P using heuristic selection with and without mode-tracking. To make meaningful comparisons, we take $N_\beta$ to be the smallest value that yields 10\%, 1\%, and 0.1\% relative error in the $N$-factor, measured against the greedy OWNS-P solution obtained using $N_\beta = 30$. We further compare the performance of the greedy algorithm when it is performed at every step of the march against its performance when it is applied only at the domain inlet, with the recursion parameters updated for subsequent stations according to Algorithm~\ref{alg:greedyTracking}. We summarize these results in Table~\ref{tab:oblique-lin-err}, where speed-up is measured relative to the naive heuristic OWNS-P calculation (P-H) at each error level. We see that OWNS-R with greedy selection applied only at the inlet (R-GI) leads to the largest speed-up relative to P-H. Heuristic mode-tracking OWNS-P (P-T) and OWNS-R with greedy selection at all stations (R-GA) lead to similar speed-ups compared to each other, while OWNS-P with greedy selection only at the inlet (P-GI) yields slightly lower speeds-up than P-T and R-GA. Finally, OWNS-P with greedy selection at all stations (P-GA) yields the least speed-up. Table~\ref{tab:oblique-lin-err} clearly demonstrates both the higher speeds of OWNS-R and the better convergence of OWNS-P, which requires fewer recursion parameters when using greedy selection.

\begin{table}[h]
\centering
\begin{subtable}{\linewidth}
\centering
\caption{$N_\beta$ required to reach target error}
\begin{tabular}{c | cccccc}
\toprule
Error & P-H & P-T & P-GA & P-GI & R-GA & R-GI \\
\midrule
10\%  & 11 & 5 & 5 & 6 & 8  & 8 \\
1\%   & 17 & 6 & 6 & 6 & 11 & 11 \\
0.1\% & 18 & 6 & 6 & 6 & 16 & 17 \\
\bottomrule
\end{tabular}
\end{subtable}
\\
\begin{subtable}{\linewidth}
\centering
\caption{Speed-up relative to heuristic OWNS-P (P-H)}
\begin{tabular}{c | cccccc}
\toprule
Error & P-H & P-T & P-GA & P-GI & R-GA & R-GI \\
\midrule
10\%  & 1.0 & 2.6 & 1.4 & 2.0 & 2.2 & 8.0 \\
1\%   & 1.0 & 3.6 & 2.0 & 3.4 & 3.8 & 12.5 \\
0.1\% & 1.0 & 3.8 & 2.1 & 3.6 & 3.8 & 10.6 \\
\bottomrule
\end{tabular}
\end{subtable}
\caption{Speed-up for 3D low-speed oblique-wave breakdown using different recursion-parameter selection strategies, relative to heuristic OWNS-P (P-H): mode-tracking (with heuristic selection) for the TS wave with OWNS-P (P-T); greedy selection at all streamwise stations with OWNS-P (P-GA) and OWNS-R (R-GA); and greedy selection at the inlet with recursion-parameter tracking using OWNS-P (P-GI) and OWNS-R (R-GI).}
\label{tab:oblique-lin-err}
\end{table}

\subsubsection{Nonlinear calculation}

For the nonlinear calculations, we truncate the Fourier series at $M=3$ temporal and $N=4$ spanwise modes, while we specify at the inlet that the oblique wave has amplitude 0.141\% based on the free-stream $u$-velocity. We select $N_\beta$ to be the smallest value such that the relative error in the $N$-factor is 0.1\% with respect to the reference solution (greedy selection with $N_\beta=30$). For NOWNS, heuristic and greedy parameter selection require $N_\beta = 18$ and $N_\beta = 6$, respectively, resulting in a speed-up of 3.83 for the greedy approach.

\subsection{2D high-speed Mack's second mode}

We apply NOWNS to the Mach 4.5 boundary-layer flow over an adiabatic flat plate studied by Ma and Zhong~\cite{Ma_2003_DNS_1}, where $M_\infty = 4.5$, $T_\infty^*=65.15$ K, $p_\infty^*=728.44$ Pa, $Pr=0.72$, and $Re_\infty^* = \rho_\infty^* U_\infty^* / \mu_\infty^*=7.2\times10^6 / $ m. For the disturbance frequency $F=2.2\times10^{-4}$, Mack's second mode (MM) is the dominant instability. We truncate the Fourier series at $M=10$ modes and specify that MM have an amplitude of 10\% based on the free-stream temperature. We compare the temperature profiles obtained using greedy and heuristic parameter selection in Figure~\ref{fig:NOWNS-MZ-01}, and discuss how heuristic NOWNS introduces spurious numerical fluctuations along the sonic line.

Figure~\ref{fig:NOWNS-MZ-T1-0500} shows that greedy and heuristic parameter selection predict similar temperature profiles for $\hat{T}_1'$ near the inlet ($\hat{T}_m'$ refers to the temperature profile for mode $m$), while Figure~\ref{fig:NOWNS-MZ-T1-2501} shows that at the domain outlet, heuristic NOWNS predicts a feature along the sonic line that is not predicted by greedy NOWNS. Figure~\ref{fig:NOWNS-MZ-T3-0500} shows that near the domain inlet, heuristic NOWNS predicts a large fluctuation in $\hat{T}_3'$ that is not predicted by greedy NOWNS. As the solution is marched downstream, this feature corrupts the solution, eventually leading to the fluctuation in $\hat{T}_1'$ observed for heuristic NOWNS in Figure~\ref{fig:NOWNS-MZ-T1-2501}. Although heuristic selection leads to accurate results for linear calculations for high-speed boundary-layer flows~\cite{Kamal_2020_HOWNS}, Figure~\ref{fig:NOWNS-MZ-01} shows that the results become inaccurate for nonlinear calculations, and that greedy selection overcomes this challenge.

\begin{figure}
    \centering
    \begin{subfigure}[b]{0.48\textwidth}
        \centering
        \includegraphics[width=1\linewidth]{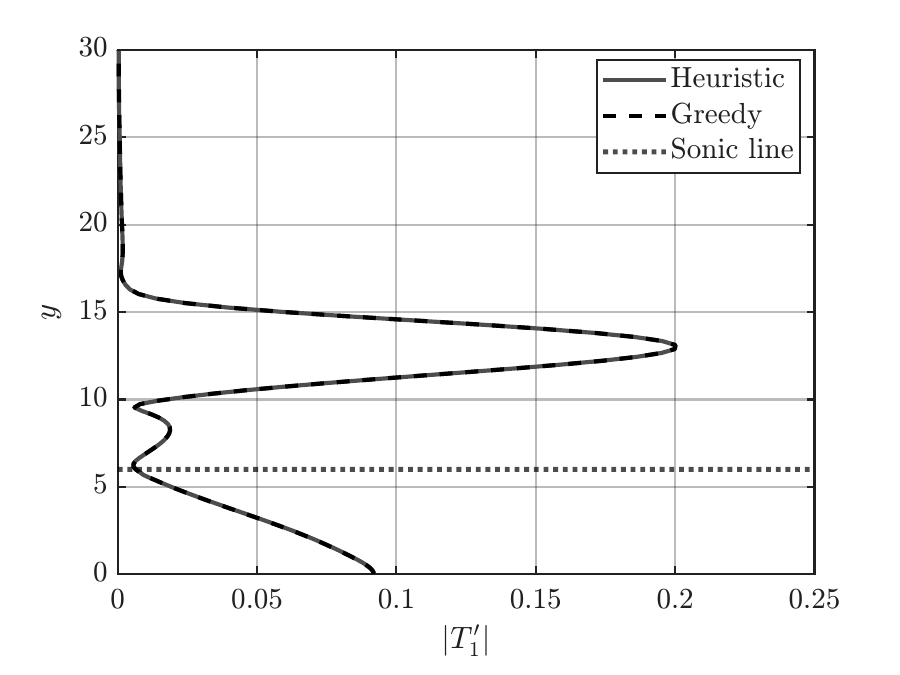}
        \caption{$\hat{T}_1'$ at $Re_x=6.92\times10^5$}\label{fig:NOWNS-MZ-T1-0500}
    \end{subfigure}
    \begin{subfigure}[b]{0.48\textwidth}
        \centering
        \includegraphics[width=1\linewidth]{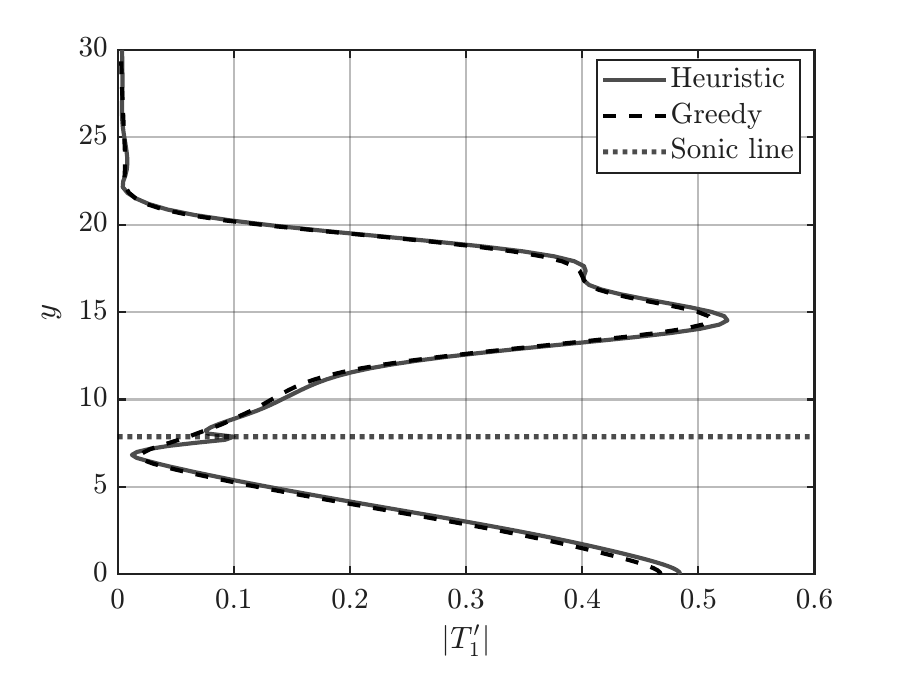}
        \caption{$\hat{T}_1'$ at $Re_x=1.21\times10^6$}\label{fig:NOWNS-MZ-T1-2501}
    \end{subfigure}\\
    \begin{subfigure}[b]{0.48\textwidth}
        \centering
        \includegraphics[width=1\linewidth]{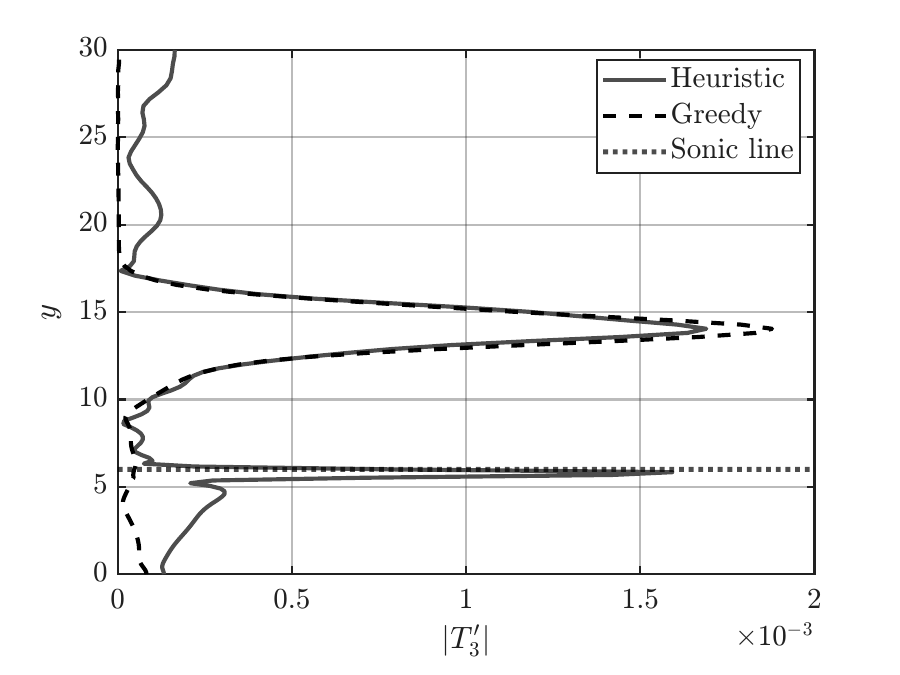}
        \caption{$\hat{T}_3'$ at $Re_x=6.92\times10^5$}\label{fig:NOWNS-MZ-T3-0500}
    \end{subfigure}
    \begin{subfigure}[b]{0.48\textwidth}
        \centering
        \includegraphics[width=1\linewidth]{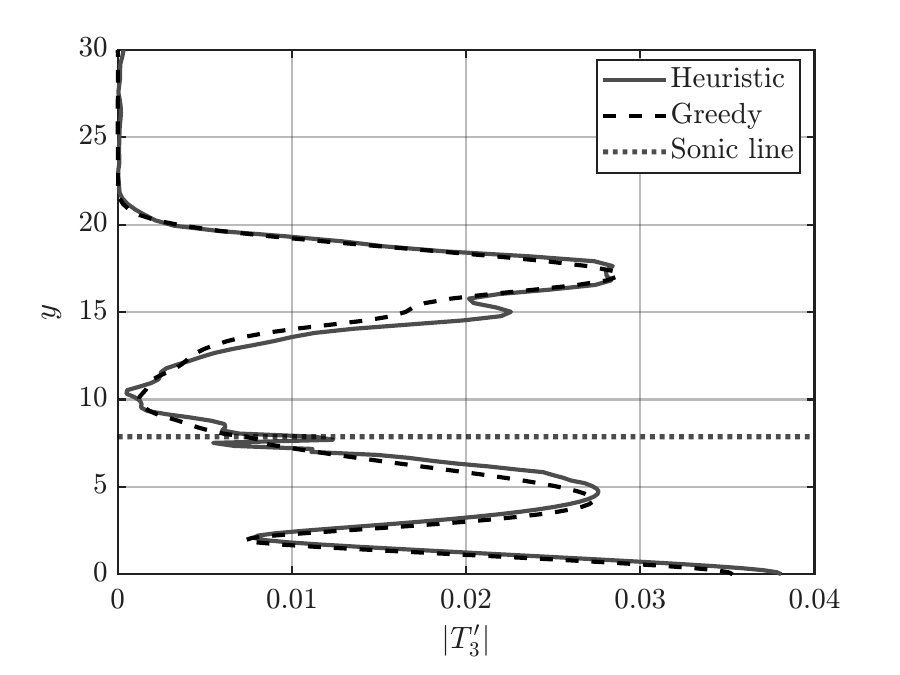}
        \caption{$\hat{T}_3'$ at $Re_x=1.21\times10^6$}\label{fig:NOWNS-MZ-T3-2501}
    \end{subfigure}    
    \caption{Temperature profiles for modes $\hat{T}_1'$ and $\hat{T}_3'$ computed for nonlinear evolution of Mack's second mode.}
    \label{fig:NOWNS-MZ-01}
\end{figure}

\section{Conclusion}

Through numerical analyses and experiments, we have compared the convergence properties of the recursive filters used by OWNS-P and OWNS-R, and introduced a \textit{subset selection} problem, which we solve using a greedy algorithm, to automatically choose recursion parameters that ensure rapid convergence of the filter error. Although we have used the OWNS label, which refers to the method when generalized to the linearized Navier-Stokes equations, we would like to emphasize that all of the analyses also apply to systems of linear first-order hyperbolic equation, and that the greedy algorithm could also be applied to such systems. In summary, we showed analytically that OWNS-P has better stability and accuracy properties and that there exist recursion parameter sets for which OWNS-P is fully converged, while OWNS-R is not. Conversely, if OWNS-R is fully converged, then OWNS-P must also be fully converged. In particular, we demonstrated numerically for boundary-layer flows that there exists heuristic parameter sets that yield a stable and accurate OWNS-P march, while the OWNS-R march is unstable. We observe that when using heuristic parameter selection, increasing $N_\beta$ does not yield stable and accurate OWNS-R marches due to rounding errors associated with finite precision arithmetic introduced when applying the OWNS-R filter~\eqref{eq:filter-r} as $N_\beta$ grows large, while greedy selection overcomes this challenge by keeping $N_\beta$ small. Table~\ref{tab:oblique-lin-err} demonstrates that OWNS-R leads to substantial speed-up relative to OWNS-P when convergent recursion parameters are chosen, while Zhu and Towne~\cite{Zhu_2021_OWNS-R} demonstrated that OWNS-R scales better for larger systems of equations. Future work should investigate how to choose better recursion parameters for OWNS-R so that $N_\beta$ remains small, either through the use of the greedy algorithm presented here or through heuristic recursion parameter selection.

We demonstrated for linear and nonlinear disturbance evolution in boundary-layer flows that our proposed greedy algorithm yields faster error convergence and a reduced computational cost relative to naive heuristic parameter selection. In the linear case, greedy OWNS-P entails a higher cost than heuristic OWNS-P with mode-tracking, while greedy OWNS-R entails a lower cost than any of the OWNS-P calculations. We further demonstrated the heuristic NOWNS for high-speed boundary-layer flows introduces spurious numerical oscillations, which are eliminated using the greedy algorithm, suggesting that the existing heuristic routine is not sufficiently accurate for nonlinear calculations. We compared the greedy and heuristic recursion parameter sets, and highlighted that greedy OWNS converges more quickly since it places its eigenvalues more efficiently, and we note that future work could use greedy parameter selection to help tune the heuristic routines. We further emphasize that the heuristic parameter selection routine requires an analytical approximation to the eigenvalues of the governing equations (e.g., the eigenvalues of the Euler equations linearized about a uniform flow can be computed analytically) and a method to use this approximation to construct sets of convergence recursion parameters. For equations of interest, an analytical approximation to the eigenvalues may not exist, and even if it does, it may not be obvious how to use the expression to construct convergent recursion parameters sets. The greedy algorithm allows for automatic recursion parameter selection without the need for analytical approximations, which will allow the OWNS approach to be applied more easily to new systems of linear first-order hyperbolic equations. Finally, we note that this work focused only on flows with a single inhomogeneous direction (the wall-normal direction), and we note that the approach may not extend readily to flows with two or more inhomogeneous directions due to the increased grid size, and so future work should explore how to adapt the greedy algorithm for such cases so that the full eigenspectrum of $M$ need not be computed.

\section*{Acknowledgments}

This work has been supported by The Boeing Company through the Strategic Research and Development Relationship
Agreement CT-BA-GTA-1.

\appendix
\section{Proofs}~\label{app:Proofs}

Proposition~\ref{prop:minimalOWNS-P} was proved by Towne et al.~\cite{Towne_2022_OWNS-P}, restated here in our notation.
\begin{proof}[Proof of Proposition~\ref{prop:minimalOWNS-P}]
    If there exists $(\hat{m},\hat{n})\in i^{(+)}\times i^{(-)}$ such that $\alpha_{\hat{m}}=\alpha_{\hat{n}}$, then
    \[
    R_{\hat{n}\hat{m}}
    =
    \prod_{j=0}^{N_\beta-1}\frac{\alpha_{\hat{m}}-\beta_{+}^{j}}{\alpha_{\hat{m}}-\beta_-^j}\frac{\alpha_{\hat{n}}-\beta_{-}^{j}}{\alpha_{\hat{n}}-\beta_+^j}
    (V_{--}^{-1}V_{-+})_{\hat{n}\hat{m}}
    =
    (V_{--}^{-1}V_{-+})_{\hat{n}\hat{m}}
    \neq 0
    \]
    so that $R_{N_\beta}^{-1}\neq I$ for any choice of recursion parameters.

    Next assume that $\alpha_m\neq\alpha_n$ for all $(m,n)\in i^{(+)}\times i^{(-)}$, and note that
    \[
    \max_{(m,n)\in i^{(+)}\times i^{(-)}}\prod_{j=0}^{N_\beta-1}\frac{|\alpha_m-\beta_{+}^{j}|}
    {|\alpha_m-\beta_-^j|}
    \frac{|\alpha_n-\beta_{-}^{j}|}
    {|\alpha_n-\beta_+^j|}=\|F_{++}\|\|F_{--}^{-1}\|,
    \]
    so that by Proposition~\ref{prop:approxProjectConvergence}, $P_{N_\beta}\to P$ when $\|F_{++}\|\|F_{--}^{-1}\|=0$. If $\tilde{N}_+\leq \tilde{N}_-$, we use~\eqref{eq:owns-p-params_plus} to obtain $\|F_{++}\|=0$ since $\alpha_m\in\{\beta_j\}_{j=0}^{N_\beta-1}$ and $\alpha_m\notin\{\beta_j\}_{j=0}^{N_\beta-1}$ for all $m \in i^{(+)}$. If $\tilde{N}_- < \tilde{N}_+$, we use~\eqref{eq:owns-p-params_minus} to obtain $\|F_{--}^{-1}\|=0$ since $\alpha_n\in\{\beta_-^j\}_{j=0}^{N_\beta-1}$ and $\alpha_n\notin\{\beta_+^j\}_{j=0}^{N_\beta-1}$ for all $n\in i^{(-)}$.
\end{proof}

The proof of Proposition~\ref{prop:errOWNSP}, which bounds the error $\|P-P_{N_\beta}\|$ for small $\|F_{++}\|\|F_{--}^{-1}\|$, uses Proposition~\ref{prop:errBoundTriangle} to bound $\|E-R_{N_\beta}^{-1}ER_{N_\beta}\|$.

\begin{proposition}\label{prop:errBoundTriangle}
    The norm of $E-R_{N_\beta}ER_{N_\beta}^{-1}$ can be bounded
    \begin{align}
    \begin{split}
        \|E-R_{N_\beta}ER_{N_\beta}^{-1}\|&\leq \|(I_{++}-F_{++}V_{++}^{-1}V_{+-}F_{--}^{-2}V_{--}^{-1}V_{-+}F_{++})^{-1}\|\\
        &\big(
        2\|F_{++}\|^2\|F_{--}^{-1}\|^2\|V_{++}^{-1}V_{+-}\|\|V_{--}^{-1}V_{-+}\|\\
        &+\|F_{++}\|\|F_{--}^{-1}\|\|V_{++}^{-1}V_{+-}\|+\|F_{++}\|\|F_{--}^{-1}\|\|V_{--}^{-1}V_{-+}\|\|
        \big).
    \end{split}
    \label{eq:errBoundTriangleFinal}
    \end{align}
\end{proposition}
\begin{proof}
    $R_{N_\beta}$ comprises four blocks, where the diagonal blocks are square and the off-diagonal block are conformal with them for partitioning. Recall from~\eqref{eq:approxProjectRMat} that $R_{N_\beta}$ is defined as the inverse of another matrix, so that obtaining a simplified expression for $R_{N_\beta}^{-1}$ is straightforward, while it is more challenging for $R_{N_\beta}$. Define $S=R_{N_\beta}^{-1}$, so that it can be inverted blockwise as
    \[
        R_{N_\beta}=S^{-1}=
        \begin{bmatrix}
            S_{++} & S_{+-}\\
            S_{-+} & S_{--}
        \end{bmatrix}^{-1}
        =
        \begin{bmatrix}
            [S^{-1}]_{++} & [S^{-1}]_{+-}\\
            [S^{-1}]_{-+} & [S^{-1}]_{--}
        \end{bmatrix}.
    \]
    Then
    \[
    E-R_{N_\beta}E R_{N_\beta}^{-1}
    =
    E-S^{-1} E S
    =
    \begin{bmatrix}
            ([S^{-1}]_{++})(S_{++})- I_{++} &
            ([S^{-1}]_{++})(S_{+-})\\
            ([S^{-1}]_{-+})(S_{++}) &
            ([S^{-1}]_{-+})(S_{+-})
        \end{bmatrix}.
    \]
    and by the triangle inequality
    \begin{align}
    \begin{split}
    \|E-R_{N_\beta}^{-1}E R_{N_\beta}\|
    &\leq
    \|([S^{-1}]_{++})(S_{++}) - I_{++}\|
    +\|([S^{-1}]_{++})(S_{+-})\|\\
    &+\|([S^{-1}]_{-+})(S_{++})\|
    +\|([S^{-1}]_{-+})(S_{+-})\|.
    \end{split}
    \label{eq:errBoundTriangle}
    \end{align} 
    Next we have
    \begin{align*}
        [S^{-1}]_{++}
        &=(S_{++}-S_{+-}S_{--}^{-1}S_{-+})^{-1},\\
        &=(I_{++}-F_{++}V_{++}^{-1}V_{+-}F_{--}^{-2}V_{--}^{-1}V_{-+}F_{++})^{-1}\\
        [S^{-1}]_{-+}
        &=-S_{--}^{-1}S_{-+}[S^{-1}]_{++}\\
        &=-F_{--}^{-1}V_{--}^{-1}V_{-+}F_{++}(I_{++}-F_{++}V_{++}^{-1}V_{+-}F_{--}^{-2}V_{--}^{-1}V_{-+}F_{++})^{-1},
    \end{align*}
    while $S_{++}=I_{++}$ and $S_{-+}=F_{++}V_{++}^{-1}V_{+-}F_{--}^{-1}$ so that
    \begin{subequations}
    \begin{align}
    \begin{split}
        \|([S^{-1}]_{++})(S_{++})- I_{++}\|
        &\leq
        \|F_{++}V_{++}^{-1}V_{+-}F_{--}^{-1}\|\times
        \|F_{--}^{-1}V_{--}^{-1}V_{-+}F_{++}\|\\
        &\times
        \|(I_{++}-F_{++}V_{++}^{-1}V_{+-}F_{--}^{-2}V_{--}^{-1}V_{-+}F_{++})^{-1}\|
    \end{split}\\
    \begin{split}
        \|([S^{-1}]_{++})(S_{+-})\|
        &\leq
        \|F_{++}V_{++}^{-1}V_{+-}F_{--}^{-1}\|\\
        &\times
        \|(I_{++}-F_{++}V_{++}^{-1}V_{+-}F_{--}^{-2}V_{--}^{-1}V_{-+}F_{++})^{-1}\|,
    \end{split}\\
    \begin{split}
        \|([S^{-1}]_{-+})(S_{++})\|
        &\leq\|F_{--}^{-1}V_{--}^{-1}V_{-+}F_{++}\|\\
        &\times\|(I_{++}-F_{++}V_{++}^{-1}V_{+-}F_{--}^{-2}V_{--}^{-1}V_{-+}F_{++})^{-1}\|,
    \end{split}\\
    \begin{split}
        \|([S^{-1}]_{-+})(S_{+-})\|
        &\leq\|F_{--}^{-1}V_{--}^{-1}V_{-+}F_{++}\|
        \times\|F_{++}V_{++}^{-1}V_{+-}F_{--}^{-1}\|\\
        &\times\|(I_{++}-F_{++}V_{++}^{-1}V_{+-}F_{--}^{-2}V_{--}^{-1}V_{-+}F_{++})^{-1}\|.
    \end{split}
    \end{align}
    \label{eq:errBoundBlockInverse}
    \end{subequations}
    Substituting~\eqref{eq:errBoundBlockInverse} into~\eqref{eq:errBoundTriangle} yields~\eqref{eq:errBoundTriangleFinal}.
\end{proof}

\begin{proof}[Proof of Proposition~\ref{prop:errOWNSP}]
    By the triangle inequality
    \[
    \|\sum_{k=0}^\infty(F_{++}V_{++}^{-1}V_{+-}F_{--}^{-2}V_{--}^{-1}V_{-+}F_{++})^k\|\leq \sum_{k=0}^\infty a^k
    \]
    for $a=\|F_{++}\|^2\|F_{--}^{-1}\|^2\|V_{++}^{-1}V_{+-}\|\|V_{--}^{-1}V_{-+}\|$, so that the Neumann series converges if $a<1$, or $\|F_{++}\|\|F_{--}^{-1}\|<(\|V_{++}^{-1}V_{+-}\|\|V_{--}^{-1}V_{-+}\|)^{-1/2}$. For $\|F_{++}\|\|F_{--}^{-1}\|<\epsilon$, with $\epsilon$ as in~\eqref{eq:errBoundEpsilon}, we obtain
    \[
        \|(I_{++}-F_{++}V_{++}^{-1}V_{+-}F_{--}^{-2}V_{--}^{-1}V_{-+}F_{++})^{-1}\|\leq1+\mathcal{O}(\epsilon^2).
    \]
    so that using~\eqref{eq:errBoundTriangleFinal} from Proposition~\eqref{prop:errBoundTriangle} yields
    \begin{equation}
        \|E-R_{N_\beta}ER_{N_\beta}^{-1}\|\leq
        \|F_{++}\|\|F_{--}^{-1}\|\big(\|V_{++}^{-1}V_{+-}\|+\|V_{--}^{-1}V_{-+}\|\big)+\mathcal{O}(\epsilon^2).
        \label{eq:errBoundNeumann}
    \end{equation}
    Finally, the induced Euclidean norm of a matrix is sub-multiplicative so that
    \[
    \|P-P_{N_\beta}\|\leq \|V\| \|E-R_{N_\beta}ER_{N_\beta}^{-1}\| \|V^{-1}\|,
    \]
    which yields~\eqref{eq:errOWNSP} when combined with~\eqref{eq:errBoundNeumann}.
\end{proof}

\bibliographystyle{siamplain}
\bibliography{ownslibrary}
\end{document}